\documentclass[12pt, reqno]{amsart}

\usepackage{amsmath, amsthm, amscd, amsfonts, amssymb, graphicx, color}
\usepackage[bookmarksnumbered, colorlinks, plainpages]{hyperref}

 \pagestyle{empty}
\hoffset = -1.5cm  \voffset = 0.1 in \setlength {\textwidth}{15cm}
\setlength {\topmargin}{0cm} \setlength {\textheight}{23cm}
\makeatletter \oddsidemargin.9375in \evensidemargin \oddsidemargin
\marginparwidth 2in \makeatother
%\itshape%SHK 24/2

 \newtheorem{theorem}{Theorem}[section]

\newtheorem{lem}[theorem]{Lemma}

\newtheorem{defn}[theorem]{Definition}

\newtheorem{exam}[theorem]{Example}

\numberwithin{equation}{section}

\newcommand{\BC}{{\Bbb C}}
\newcommand{\BN}{{\Bbb N}}
\newcommand{\BR}{{\Bbb R}}

\newcommand{\m}{{\rm max}}
\newcommand{\M}{{\mathcal{M}}}

\newcommand{\A}{{\mathcal{A}}}

\newcommand{\B}{{\mathcal{B}}}

\newcommand{\Ch}{{\rm Ch}}

\pagestyle{plain} \setcounter{page}{1}

\begin{document}
\title[Composition in modulus maps on semigroups of continuous functions]{Composition in modulus maps on semigroups of continuous functions}

\author{Bagher Jafarzadeh and  Fereshteh Sady$^1$}

\subjclass[2010]{Primary 47B38, 46J10, Secondary 47B33}

\keywords{function  spaces, positive cone,  Choquet boundaries,
weighted composition operators, norm preserving}

\maketitle
\begin{center}

\address{{\em   Department of Pure
Mathematics, Faculty of \\ Mathematical Sciences,
 Tarbiat Modares University,\\ Tehran, 14115-134, Iran}}

\vspace*{.25cm}
  \email{b.jafarzadeh@modares.ac.ir, sady@modares.ac.ir}
\end{center}
\footnote{$^1$ Corresponding author}

\maketitle

\begin{abstract}
For locally compact Hausdorff spaces $X$ and $Y$, and function
algebras $A$ and $B$ on $X$ and $Y$, respectively,  surjections
$T:A \longrightarrow B$ satisfying norm multiplicative condition
$\|Tf\, Tg\|_Y =\|fg\|_X$, $f,g\in A$, with respect to the
supremum norms, and those satisfying
$\||Tf|+|Tg|\|_Y=\||f|+|g|\|_X$ have been extensively studied.
Motivated by this, we consider certain (multiplicative or
additive) subsemigroups $A$ and $B$  of $C_0(X)$ and $C_0(Y)$,
respectively, and study  surjections $T: A \longrightarrow B$
satisfying the norm condition $\rho(Tf, Tg)=\rho(f,g)$, $f,g \in
A$, for some class of two variable positive functions $\rho$. It
is shown that $T$ is also a composition in modulus map.
 \end{abstract}
\section{Introduction}
The interaction between different structures of a space has been
studied in many settings. In the context of function algebras, the
classical Banach-Stone theorem and its generalizations
characterize isometries between certain algebras of continuous
functions as multiples by a continuous function of an algebra
isomorphisms. By the Mazur-Ulam theorem, any surjective isometry
between real normed spaces, preserves midpoints, and so it is a
real-linear map up to a translation. That is, surjective
isometries reveal real vector space structures of the normed
spaces.

Multiplicative version of the Banach-Stone theorem characterizes
surjections $T:A \longrightarrow B$, not assumed to be linear,
between different subsets $A$ and $B$ of $C_0(X)$ and $C_0(Y)$,
for locally compact Hausdorff spaces $X$ and $Y$, which are
multiplicatively norm-preserving, i.e.  $\|Tf\;Tg\|_Y=\|fg\|_X$
holds  for all $f,g\in A$. The notations $\|\cdot\|_X$ and
$\|\cdot\|_Y$ stand for the supremum norms. In the setting of
function algebras, such a map $T$ is a  composition in modulus
map, i.e. there exists a homeomorphism $\Phi: \Ch(B)
\longrightarrow \Ch(A)$ between the Choquet boundaries of $A$ and
$B$ such that $|Tf(y)|=|f(\Phi(y))|$ for all $f\in A$ and $y\in
\Ch(B)$, see \cite{Lam-Lt-Tonev}. The idea of considering such
maps comes from Molnar's result \cite{Molnar} concerning
multiplicatively spectrum preserving maps between operator
algebras and also $C(X)$-spaces. The result has been improved in
various directions for many different settings
 such as (Banach) function  algebras and their multiplicative subsets, see for example
 \cite{Hatori, Hat-mon, Hat-etal, Hos-Sady, Rao-Roy1, Rao-Roy2} and also the survey \cite{Survey}.
Norm additive in modulus maps between function algebras  has been
studied in \cite{Tonev-Yates}. Such mappings satisfy the norm
condition $\|\,|Tf|+|Tg|\,\|_Y=\|\,|f|+|g|\,\|_X$, and it is shown
in \cite{Tonev-Yates} that they are also composition in modulus
maps. We note that for positive cones of spaces of functions, the
above norm condition is, in fact, the norm additive condition
$\|Tf+Tg\|_Y=\|f+g\|_X$. Motivated by the Mazur-Ulam theorem, the
authors of \cite{Molnar-Z} consider a more general problem for
positive cones of operator algebras and positive cones of
subalgebras of continuous functions. Indeed, by introducing the
notation of mean, they study surjections $T$ between operator
algebras and between positive cones of subalgebras of continuous
functions satisfying the norm condition
$\|\M(Tf,Tg)\|=\|\M(f,g)\|$ with respect to a mean $\M$. 
A similar problem has been considered in the recent work \cite{Jaf-Sady} of the
authors.

In
\cite{Hat-etal2}, Hatori et. al. introduced the notations of
subdistances, metricoid spaces and  midpoint of the elements of
metricoid spaces, and then give some Mazur-Ulam type theorem.  In
particular, for a compact Hausdorff space $X$, they characterize
surjective maps $T$ on the set of strictly positive functions in
$C(X)$  preserving one of the subdistances
\[ \delta_+(f,g)=\left\|\frac{f}{g}-1\right\|_{X}+\left\|\frac{g}{f}-1\right\|_{X} \]
and
\[\delta_{{\rm max}}(f,g)={\rm max}\left(\left\|\frac{f}{g}-1\right\|_{X},
\left\|\frac{g}{f}-1\right\|_{X}\right).\] Motivated by the above
results, in this paper we consider two variable positive functions
$\rho_+$ and $\rho_{\rm max}$ defined by
\[ \rho_+(f,g)=\|\varphi(f,g)\|_X+\|\varphi(g,f)\|_X\]
and
\[\rho_{\rm max}(f,g)={\rm max}( \|\varphi(f,g)\|_X,\|\varphi(g,f)\|_X)\]
for $f,g\in C_0(X)$, where $ X $ is a locally compact Hausdorff
space and $\varphi: \BC \times \BC \longrightarrow \BR^+$ is a
certain continuous two variable function. Here for $f,g\in
C_0(X)$,  $\varphi(f,g)(x)=\varphi(f(x),g(x))$, $x\in X$. We study
surjections $T:A \longrightarrow B$ between some (multiplicative
or additive) semigroups $A$ and $B$ of continuous functions on
locally compact Hausdorff spaces $X$ and $Y$, respectively, such
that $\rho(Tf,Tg)=\rho(f,g)$, $f,g\in A$, where $\rho\in
\{\rho_{\rm max}, \rho_+\}$. It is shown that such a map $T$ is
also a composition in modulus map (Theorems \ref{main1} and
\ref{main2}).

\section{Preliminaries}
For a locally compact Hausdorff space $X$, $C_b(X)$ is the Banach
space of bounded continuous complex-valued functions on $X$ with
the supremum norm $\|\cdot \|_X$ and $C_0(X)$ is the closed
subalgebra  of $C_b(X)$ consisting of continuous functions
vanishing at infinity. A {\em function algebra} on $X$ is a closed
subalgebra $A$ of $C_0(X)$ which strongly separates the points of
$X$, that is, for any distinct points $x,y\in X$, there exists $f\in
A$ with $f(x)\neq f(y)$, and for each point $x\in X$, there exists
$g\in A$ with $g(x)\neq 0$.
%In the compact case, all function
%algebras are assumed to contain the constants.

For a subset $A$ of $C_0(X)$, a point $x\in X$ is called a {\em
strong boundary point} of $A$ if for each $\epsilon >0$ and
neighborhood $V$ of $x$, there exists $f\in A $ such that $
f(x)=1=\|f\|_X$ and $|f|<\epsilon $ on $X\setminus V $. We denote
the set of all strong boundary points of $A$ by $\delta(A)$. For a
point $x\in X$, the evaluation functional $e_x:A \longrightarrow
\BC$ is defined by $e_x(f)=f(x)$, $f\in A$.  For a subspace  $A$
of $C_0(X)$, the \emph{Choquet boundary} of $A$, denoted by $\Ch(A)$,
consists of all points $x\in X$ such that $e_x$ is an extreme
point of the unit ball of $A^*$. It is well known that $\Ch(A)$ is
a boundary for $A$, that is, for each $f\in A$, there exists a point
$x\in \Ch(A)$ such that $|f(x)|=\|f\|_X$, see \cite[Page
184]{Taylor}. In general, $\delta(A) \subseteq \Ch(A)$ (see
\cite[Lemma 3.1]{Jam-Sady}) and if  $A$ is a function algebra,
then  $\delta(A)=\Ch(A)$ (see \cite[Theorem 4.7.22]{Leib} for
compact case and \cite[Theorem 2.1]{Rao-Roy2} for general case).

Let $X$ be a locally compact Hausdorff space and $A$ be a subset
of $C_0(X)$. For a point $x_0\in X$, we fix the following
notations
\[ V_{x_0}(A)=\{f\in A:f(x_0)=1=\|f\|_X\}, \;\; F_{x_0}(A)=\{f\in A: |f(x_0)|=1=\|f\|_X\}.\]
Clearly, for $x_0\in \delta(A)$, these  sets  are nonempty.
Meanwhile, for $x_1,x_2\in \delta(A)$, each of the inclusions
$V_{x_1}(A) \subseteq V_{x_2}(A)$ and $F_{x_1}(A) \subseteq
F_{x_2}(A)$ implies that  $x_1=x_2$.

For $f\in C_0(X)$, we also set $M(f)=\{x\in X: |f(x)|=\|f\|_X\}$.
The notation $A_+$ is used for the  set of positive elements  of
$A$, i.e. $A_+=\{f\in A : f(x)\geq 0 \text{ for all } x\in X\}$.
We also put $|A|=\{|f|: f \in A\}$.

For a locally compact Hausdorff space $X$ and a subspace $A$ of
$C_0(X)$, a  function $f\in A$ with $\|f\|_X=1$ is called a {\em
peaking function} of $A$ if for each $x\in X$, either $|f(x)|<1$
or $f(x)=1$. A closed subset $F$ of $X$ is a {\em peak set} of $A$
if there exists a peaking function $f\in A$ such that  $F=\{x\in X: f(x)=1\}$. It
is well known that in a function algebra $A$ on $X$, each nonempty
intersection of peak sets of $A$ intersects $\Ch(A)$.
%The same is true for the case that
%$A$ is a subspace of $C_0(X)$, see \cite{Gh-Sady}.

\section {Certain two variable functions}
In this section, we consider a positive two variable function
$\varphi$ with a property called (inc) and provide some required
lemmas which will be used in the next sections.

 Let $\varphi: \BC \times \BC \longrightarrow \BR^+$ be a continuous map. We define the
following increasing property:
\renewcommand{\labelitemi}{(inc)}
\begin{itemize} \item
$\varphi$ is strictly increasing in modulus  with respect to both
variables, in the sense that for $s_1,s_2\in\BC$, if $|s_1|\le
|s_2|$, then  $\varphi(s_1,t)\le \varphi(s_2,t)$ and
$\varphi(t,s_1)\le \varphi(t,s_2)$ for all $t\in \BC $, and the
same implication holds for all $t\in \BC\backslash \{0\}$ if we
replace ''$\le $'' by ''$<$''.
\end{itemize}
Examples of two variable functions satisfying (inc) are as
follows.

\begin{exam}{\rm
(i) For strictly positive scalars $a$ and $b$, the maps $\varphi_{a,b}$ and $
\psi_{a,b}$ on $\BC \times \BC$  defined by $\varphi
_{a,b}(s,t)=a\,|s|+b\,|t|$ and   $\psi_{a,b}(s,t)=|s|^a \, |t|^b$
satisfy (inc).

(ii) For strictly positive scalar $p$, the map $\varphi_p: \BC
\times \BC \longrightarrow \BR^+$ defined by $\varphi
_p(s,t)=(|s|^p+|t|^p)^{1/p}$ satisfies (inc).

(iii) If $\varphi, \psi: \BC \times \BC \longrightarrow \BR^+$ are
continuous maps such that $\varphi$ is (not necessarily strictly)
increasing in modulus and $\psi$ satisfies (inc), then
$\varphi+\psi$ also satisfies (inc). In particular, the following
maps satisfy (inc)
\begin{align*}
 \varphi(s,t) &=a|s|+b|t|+|s|^c |t|^d \;\; {\rm  for} \; a,b,c,d>0,\\
 \varphi(s,t)&= {\rm max}(|s|,|t|)+ |s|+|t|, \; \;  \psi(s,t)={\rm
max}(|s|,|t|)+|st|,\\
 \varphi(s,t)&= {\rm min}(|s|,|t|)+ |s|+|t|, \; \; \psi(s,t)={\rm
min}(|s|,|t|)+|st|.
\end{align*}
 }
\end{exam}

For functions $f,g \in C_b(X)$, the continuous function
$\varphi(f,g)$ on $X$ is defined by $\varphi(f,g)(x)=
\varphi(f(x),g(x))$ for all $x\in X$.

Next lemma is easily verified. For the sake of completeness, we state and prove it here.

\begin{lem} \label{lem1}
Let $X$ be a locally compact Hausdorff space and let $\varphi: \BC
\times \BC \longrightarrow \BR^+$ be  a continuous map satisfying
{\rm (inc)}.

{\rm (i)} If $\varphi(0,0)=0$, then for all $f,g\in C_0(X)$, we
have $\varphi(f,g)\in C_0(X)$.

{\rm (ii)} For $s,t\in \BC$, we have $\varphi(s,t)=\varphi(|s|,|t|)$.

{\rm (iii)} For $a,b\in \BC$ and $c,d\in \BC \backslash
\{0\}$, if $|a|<|c|$ and $|b|\le |d|$, then $\varphi(a,b)<
\varphi(c,d)$ and $\varphi(b,a)<\varphi(d,c)$.

{\rm (iv)} For  $f,g\in C_0(X)$, if  $r,s>0$ such that
$\varphi(f(x),g(x))< \varphi(r,s)$ for all $x\in X$, then
$\|\varphi(f,g)\|_X<\varphi(r,s)$.
\end{lem}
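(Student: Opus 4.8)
The plan is to prove the four parts of Lemma \ref{lem1} more or less independently, relying on the property (inc) and basic facts about $C_0(X)$. Parts (ii) and (iii) are purely pointwise statements about $\varphi$, so I would establish them first and then use them, together with continuity, for the function-theoretic parts (i) and (iv).

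For part (ii), I would argue that (inc) forces $\varphi$ to depend only on the moduli of its arguments. Fix $s,t\in\BC$. Since $|s|=|\,|s|\,|$, the (inc) hypothesis applied with the weak inequality $|s|\le |\,|s|\,|$ in \emph{both} directions gives $\varphi(s,t)\le\varphi(|s|,t)$ and $\varphi(|s|,t)\le\varphi(s,t)$, whence $\varphi(s,t)=\varphi(|s|,t)$; repeating the argument in the second variable yields $\varphi(s,t)=\varphi(|s|,|t|)$. The key point is that (inc) is stated as an implication about moduli, so two numbers with equal modulus are interchangeable. For part (iii), I would combine the strict and non-strict halves of (inc): from $|a|<|c|$ and the fact that the strict implication holds for every fixed second entry in $\BC\setminus\{0\}$, I get $\varphi(a,b)<\varphi(c,b)$ (here I first replace $b$ by $|b|$ using (ii), and since $|d|\neq 0$ I can arrange the fixed entry to be nonzero), and from $|b|\le|d|$ together with the non-strict implication I get $\varphi(c,b)\le\varphi(c,d)$; chaining these gives $\varphi(a,b)<\varphi(c,d)$. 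The companion inequality $\varphi(b,a)<\varphi(d,c)$ follows by the symmetric application of (inc) in the other variable.

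For part (i), assuming $\varphi(0,0)=0$, I would show $\varphi(f,g)\in C_0(X)$ for $f,g\in C_0(X)$. Continuity of $\varphi(f,g)$ is immediate from continuity of $\varphi$ and of $f,g$. To see that it vanishes at infinity, fix $\epsilon>0$; by continuity of $\varphi$ at $(0,0)$ with $\varphi(0,0)=0$, choose $\eta>0$ so that $\varphi(s,t)<\epsilon$ whenever $|s|,|t|<\eta$. Since $f,g\in C_0(X)$, the set $K=\{x:|f(x)|\ge\eta\}\cup\{x:|g(x)|\ge\eta\}$ is compact, and off $K$ we have $\varphi(f(x),g(x))<\epsilon$, so $\varphi(f,g)$ vanishes at infinity.

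Part (iv) is the one I expect to require the most care, since it upgrades a strict \emph{pointwise} inequality to a strict inequality of suprema, and on a noncompact $X$ the supremum need not be attained. Given $f,g\in C_0(X)$ and $r,s>0$ with $\varphi(f(x),g(x))<\varphi(r,s)$ for all $x$, the naive bound only yields $\|\varphi(f,g)\|_X\le\varphi(r,s)$. The plan is to produce a \emph{uniform} gap. By part (i) (with $\varphi(0,0)=0$, which I can assume or reduce to here since only relative size matters) the function $\varphi(f,g)$ lies in $C_0(X)$, so it attains its supremum at some point $x_0\in X$ when the supremum is positive; then $\|\varphi(f,g)\|_X=\varphi(f(x_0),g(x_0))<\varphi(r,s)$ directly. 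If instead the supremum is zero the strict inequality is trivial since $\varphi(r,s)>0$ by part (iii) applied to $0<r$, $0\le s$ giving $\varphi(0,0)<\varphi(r,s)$. The main obstacle is precisely the attainment of the supremum, and the clean way around it is to invoke part (i) so that $\varphi(f,g)\in C_0(X)$ and its supremum is realized at a point of $X$.
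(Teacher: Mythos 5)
Your chaining order in part (iii) contains a genuine, though easily repaired, misstep. You first apply the strict half of (inc) with the second entry fixed at $b$ to get $\varphi(a,b)<\varphi(c,b)$, but (inc) only guarantees strictness when the fixed entry is \emph{nonzero}, and nothing prevents $b=0$; replacing $b$ by $|b|$ via (ii), as your parenthetical suggests, does not help, since $|b|=0$ as well, and the remark ``since $|d|\neq 0$ I can arrange the fixed entry to be nonzero'' does not describe any actual arrangement. Concretely, $\varphi(s,t)=|st|$ satisfies (inc) (it is one of the paper's own examples), yet $\varphi(a,0)=0=\varphi(c,0)$ for all $a,c$, so your first step fails at $b=0$. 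The repair is to chain in the other order: $\varphi(a,b)\le\varphi(a,d)<\varphi(c,d)$, where the first inequality is the non-strict half of (inc) in the second variable and the strict step uses the fixed second entry $d\in\BC\backslash\{0\}$; symmetrically, $\varphi(b,a)\le\varphi(d,a)<\varphi(d,c)$ with fixed first entry $d\neq 0$. Note that your zero-supremum case in (iv) invokes (iii) precisely with $b=0$ (namely $\varphi(0,0)<\varphi(r,s)$), so this fix is load-bearing for your argument, not cosmetic. The paper gives no details for (i)--(iii), dismissing them as easily verified, and this is exactly the point where care is needed.

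Parts (i) and (ii) are correct and standard. For (iv) your route is sound but differs slightly from the paper's: you normalize to $\varphi(0,0)=0$ --- legitimate, though the justification you gloss over is that (inc) forces $\varphi(0,0)=\min\varphi$ (chain $\varphi(0,0)\le\varphi(s,0)\le\varphi(s,t)$), so subtracting the constant $\varphi(0,0)$ preserves (inc), nonnegativity, and both sides of the desired inequality --- and then invoke (i) plus the fact that a nonnegative $C_0$ function with positive supremum attains it. The paper instead passes directly to the one-point compactification $X_\infty$: $\varphi(f,g)$ extends continuously to $X_\infty$ with value $\varphi(0,0)$ at infinity, and the inequality $\varphi(0,0)\le\varphi(0,s)<\varphi(r,s)$ (using $s>0$ and the strict half of (inc)) shows the strict pointwise bound persists at the added point, so the maximum over the compact space $X_\infty$ is strictly below $\varphi(r,s)$. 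Both arguments are the same compactness/attainment idea; the paper's version needs neither your normalization nor your case split on whether the supremum is zero, while yours has the mild virtue of reusing part (i) rather than re-extending the function by hand.
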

\begin{proof}
(i)-(iii) are easily verified by using (inc).

(iv) Let $X_\infty$ be the one point compactification of $X$. Then
 $\varphi(f,g)$ is an element of $C(X_\infty)$ and since $\varphi(0,0)\le \varphi(0,s)<\varphi(r,s)$, it follows
from the hypothesis that $\varphi(f(x),g(x))< \varphi(r,s)$ for all
$x\in X_\infty$. Hence
\[ \|\varphi(f,g)\|_X\le
\|\varphi(f,g)\|_{X_\infty}<\varphi(r,s).\qedhere\]
\end{proof}

\begin{defn}\label{dfn}
Let $\varphi: \BC \times \BC \longrightarrow \BR^+$ be a
continuous map satisfying {\rm (inc)}. For a locally compact
Hausdorff space $X$, we define $\rho_+, \rho_{{\rm max}}: C_b(X)
\times C_b(X) \longrightarrow \BR^+$ by
\[ \rho_+(f,g)=\|\varphi(f,g)\|_X+\|\varphi(g,f)\|_X\]
and
\[ \rho_{{\rm max}} (f,g)=\max (\|\varphi(f,g)\|_X,\|\varphi(g,f)\|_X).\]
\end{defn}
In the rest of this section, we assume that the continuous map
$\varphi:\BC \times \BC \longrightarrow \BR^+$ satisfies (inc), and
positive functions $\rho_+$ and $\rho_{{\rm max}}$ are as  above.

Next lemma states some simple observations about $\rho_+$ and
$\rho_{\rm max}$.

\begin{lem}
Let $X$ be a locally compact Hausdorff space and $\rho\in \{\rho_+, \rho_{\rm max}\}$. Then the following
statements hold.

{\rm (i)} For $f_1,g_1,f_2,g_2\in C_b(X)$, the
inequality $\rho(f_1,g_1)<\rho(f_2,g_2)$ implies that either
$\|\varphi(f_1,g_1)\|_X< \|\varphi(f_2,g_2)\|_X$ or
$\|\varphi(g_1,f_1)\|_X<\|\varphi(g_2,f_2)\|_X$.

{\rm (ii)} For $f,g \in C_b(X)$ and $r,s>0$, if
$\rho(f,g)<\rho(r,s)$, then for each $x\in X$, we have either
$|f(x)|<r$ or $|g(x)|<s$.
\end{lem}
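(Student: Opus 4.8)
The plan is to prove both parts by contraposition, reducing everything to the monotonicity property {\rm (inc)} via Lemma~\ref{lem1}. Throughout I would abbreviate $a_i:=\|\varphi(f_i,g_i)\|_X$ and $b_i:=\|\varphi(g_i,f_i)\|_X$, so that $\rho_+(f_i,g_i)=a_i+b_i$ and $\rho_{\rm max}(f_i,g_i)=\max(a_i,b_i)$.

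For part {\rm (i)}, the claim is that $\rho(f_1,g_1)<\rho(f_2,g_2)$ forces $a_1<a_2$ or $b_1<b_2$, and I would establish the contrapositive at the level of the four real numbers $a_i,b_i$. Assuming $a_1\ge a_2$ and $b_1\ge b_2$ simultaneously, I would show $\rho(f_1,g_1)\ge\rho(f_2,g_2)$ in both cases: when $\rho=\rho_+$ this is immediate from $a_1+b_1\ge a_2+b_2$, and when $\rho=\rho_{\rm max}$ it follows from $\max(a_1,b_1)\ge\max(a_2,b_2)$. This part involves no topology and no use of {\rm (inc)}; it is just the monotonicity of sum and maximum.

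For part {\rm (ii)}, I would again proceed by contraposition. First I note that, interpreting the scalars $r,s>0$ as constant functions, $\|\varphi(r,s)\|_X=\varphi(r,s)$ and $\|\varphi(s,r)\|_X=\varphi(s,r)$, so $\rho_+(r,s)=\varphi(r,s)+\varphi(s,r)$ and $\rho_{\rm max}(r,s)=\max(\varphi(r,s),\varphi(s,r))$. Now suppose the conclusion fails, i.e. there is a point $x_0\in X$ with $|f(x_0)|\ge r$ and $|g(x_0)|\ge s$. The key step is to extract pointwise lower bounds: using Lemma~\ref{lem1}{\rm (ii)} I rewrite $\varphi(f(x_0),g(x_0))=\varphi(|f(x_0)|,|g(x_0)|)$, and then apply {\rm (inc)} one coordinate at a time---first raising the first argument from $r$ up to $|f(x_0)|$, then the second from $s$ up to $|g(x_0)|$---to obtain $\varphi(r,s)\le\varphi(f(x_0),g(x_0))\le\|\varphi(f,g)\|_X$. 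The symmetric argument yields $\varphi(s,r)\le\|\varphi(g,f)\|_X$.

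With these two inequalities in hand, the conclusion follows by combining them according to the form of $\rho$: for $\rho_+$ I add them to get $\rho_+(f,g)\ge\varphi(r,s)+\varphi(s,r)=\rho_+(r,s)$, and for $\rho_{\rm max}$ I take maxima to get $\rho_{\rm max}(f,g)\ge\rho_{\rm max}(r,s)$; either way this contradicts the hypothesis $\rho(f,g)<\rho(r,s)$. The only point that requires a little care---and the closest thing to an obstacle in an otherwise routine argument---is the bookkeeping of the two-variable monotonicity in {\rm (inc)}: one must apply it separately in each coordinate and track that all the inequalities involved are the non-strict ones, which is exactly the form {\rm (inc)} provides once the hypotheses $r,s>0$ keep the arguments in the admissible range.
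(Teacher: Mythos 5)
Your proof is correct; the paper in fact states this lemma without proof (introducing it only as ``some simple observations''), and your contrapositive argument---monotonicity of sum and max for part (i), and the coordinate-wise application of the non-strict case of (inc) together with Lemma~\ref{lem1}(ii) and the pointwise bound $\varphi(f(x_0),g(x_0))\le\|\varphi(f,g)\|_X$ for part (ii)---is exactly the routine verification the authors leave to the reader. One trivial remark: the non-strict half of (inc) holds for all $t\in\BC$ with no restriction, so the hypothesis $r,s>0$ plays no role in your argument (it matters only where the paper needs strict inequalities, as in Lemma~\ref{lem1}(iii)--(iv)).
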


\begin{lem}\label{romax}
Let $X$ be a locally compact Hausdorff space and  $A$ be a subset
of $C_0(X)$. Let $x_0\in \delta(A)$ and $f\in A$. Then for each
$\epsilon>0$,  there exists $h\in V_{x_0}(A)$ such that
\[ \rho_{\rm max}(f,\|f\|_Xh) \le \rho_{\rm max}(|f(x_0)|+\epsilon, \|f\|_X)\]
\end{lem}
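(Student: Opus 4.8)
The plan is to exploit the fact that $x_0$ is a strong boundary point to manufacture a function $h\in V_{x_0}(A)$ that peaks at $x_0$ and is uniformly small off a prescribed neighborhood of $x_0$, and then to control the two quantities $\|\varphi(f,\|f\|_Xh)\|_X$ and $\|\varphi(\|f\|_Xh,f)\|_X$ by a pointwise case analysis. Write $s=\|f\|_X$ and $r=|f(x_0)|+\epsilon$, so that the target right-hand side is $\rho_{\rm max}(r,s)=\max\bigl(\varphi(r,s),\varphi(s,r)\bigr)$, where $r,s$ are read as constant functions. Since $|f(x_0)|<r$, continuity of $f$ supplies a neighborhood $V$ of $x_0$ on which $|f|<r$. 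Fixing $\epsilon'>0$ with $s\epsilon'\le r$ and invoking the definition of $\delta(A)$, I obtain $h\in A$ with $h(x_0)=1=\|h\|_X$ (hence $h\in V_{x_0}(A)$) and $|h|<\epsilon'$ on $X\setminus V$.

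The heart of the argument is the pointwise bound, valid for every $x\in X$,
\[ \max\bigl(\varphi(f(x),sh(x)),\,\varphi(sh(x),f(x))\bigr)\le \max\bigl(\varphi(r,s),\varphi(s,r)\bigr). \]
By Lemma~\ref{lem1}(ii) I may replace each argument by its modulus, so that $|f(x)|\le s$ and $s|h(x)|\le s$ hold everywhere, while $|f(x)|<r$ on $V$ and $s|h(x)|<s\epsilon'\le r$ off $V$. I then split into the two regions and push the monotonicity of (inc) through each slot: for $x\in V$ the first term is $\le\varphi(r,s)$ and the second is $\le\varphi(s,r)$; for $x\in X\setminus V$ the roles swap, the first term being $\le\varphi(s,s\epsilon')\le\varphi(s,r)$ and the second $\le\varphi(s\epsilon',s)\le\varphi(r,s)$. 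In either region both terms are dominated by $\max\bigl(\varphi(r,s),\varphi(s,r)\bigr)$, which is exactly the displayed inequality.

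Taking suprema over $x\in X$ then gives $\|\varphi(f,\|f\|_Xh)\|_X\le\max\bigl(\varphi(r,s),\varphi(s,r)\bigr)$ and likewise for $\varphi(\|f\|_Xh,f)$, whence $\rho_{\rm max}(f,\|f\|_Xh)\le\rho_{\rm max}(r,s)$, as required; the degenerate case $f=0$ is immediate, since then any $h\in V_{x_0}(A)$ works. I expect the main obstacle to be bookkeeping rather than conceptual: one must choose $\epsilon'$ small relative to both $r$ and $s$ so the off-$V$ estimates close, and one must recognize why the maximum on the right is essential — each of the two terms attains a $\varphi(r,s)$-type bound on one region and a $\varphi(s,r)$-type bound on the other, so neither single expression alone would suffice. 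Finally, the appeal to $x_0\in\delta(A)$ rather than merely $x_0\in\Ch(A)$ is precisely what guarantees a peaking-type $h$ with the required smallness off $V$.
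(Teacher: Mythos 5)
Your proof is correct and takes essentially the same route as the paper's: both use $x_0\in\delta(A)$ to produce $h\in V_{x_0}(A)$ that is small off a neighborhood $V$ on which $|f|<|f(x_0)|+\epsilon$, then bound $\varphi(f,\|f\|_Xh)$ and $\varphi(\|f\|_Xh,f)$ pointwise on $V$ and on $X\setminus V$ using the monotonicity in (inc), and finish by taking suprema. The only cosmetic difference is your choice of $\epsilon'$ with $\|f\|_X\,\epsilon'\le |f(x_0)|+\epsilon$ where the paper takes $|h|<\epsilon/\|f\|_X$; these serve the identical purpose.
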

\begin{proof}
 The inequality is obvious if $f=0$. Hence we assume that $f\neq 0$.

  For  $0<\epsilon \le 1$, consider the open neighborhood $V=\{x\in X: |f(x)-f(x_0)|<\epsilon\}$ of $x_0$.
 Since $x_0\in \delta(A)$,  there exists $h\in V_{x_0}(A)$ such that $|h|< \frac{\epsilon}{\|f\|_X}$ on $X\backslash V$.
 Using condition (inc) on $\varphi$,  we get  $\varphi(f(x),\|f\|_X h(x))\le \varphi(|f(x_0)|+\epsilon,\|f\|_X)$
 for all $x\in V$, and moreover, $\varphi(f(x), \|f\|_X h(x))\le
\varphi(\|f\|_X, \epsilon)\le \varphi(\|f\|_X, |f(x_0)|+\epsilon)$
for all $x\in X\backslash V$. Therefore,  $\|\varphi(f,\|f\|_X
h)\|_X \le \rho_{\m}(|f(x_0)|+\epsilon, \|f\|_X)$. A similar
discussion shows that $\|\varphi(\|f\|_X h,f)\|_X \le
\rho_{\m}(|f(x_0)|+\epsilon, \|f\|_X)$. Hence
\[\rho_{{\rm max}}(f,\|f\|_X h)\le \rho_{{\rm max}}(|f(x_0)|+\epsilon, \|f\|_X).\qedhere\]
\end{proof}

\begin{lem}\label{ro+}
Let $X$ be a locally compact Hausdorff space and $A$ be a subset
of $C_0(X)$. Let  $x_0\in \delta(A)$,  $f\in A$ and $\epsilon>0$.

{\rm (i)} If $\varphi$ satisfies  the additional condition
$\varphi(t,0)=0=\varphi(0,t)$ for all $t> 0$, then there exists
$h\in V_{x_0}(A)$ such that
\[ \rho_+(f,h) < \rho_+(|f(x_0)|+\epsilon,1).\]

{\rm (ii)} If for all $a>0$, $\varphi(t,a)\to \infty$ and
$\varphi(a,t)\to \infty$ as $t\to \infty$, then  there exist
$\lambda>0$ and $h\in V_{x_0}(A)$ such that
\[ \rho_+(f,\lambda h) < \rho_+(|f(x_0)|+\epsilon,\lambda).\]
\end{lem}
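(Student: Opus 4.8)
The plan is to adapt the argument of Lemma~\ref{romax}: using that $x_0\in\delta(A)$, produce a function $h\in V_{x_0}(A)$ that is uniformly small on the complement of a small neighborhood of $x_0$, estimate $\varphi(f,h)$ and $\varphi(h,f)$ separately on that neighborhood and on its complement, and finally invoke Lemma~\ref{lem1}(iv) to turn pointwise strict inequalities into strict inequalities of the supremum norms. Write $r=|f(x_0)|+\epsilon>0$; the trivial case $f=0$ is handled by the same scheme (then the neighborhood below is all of $X$), so assume $f\neq0$. Fix the open neighborhood $V=\{x\in X:|f(x)-f(x_0)|<\epsilon\}$ of $x_0$, so that $|f(x)|<r$ on $V$ and $|f(x)|\le\|f\|_X$ on all of $X$.

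The behaviour on $V$ is common to both parts. Since $|f(x)|<r$ and $|h(x)|\le\|h\|_X=1$ there, condition (inc) gives the pointwise strict inequalities $\varphi(f(x),c\,h(x))<\varphi(r,c)$ and $\varphi(c\,h(x),f(x))<\varphi(c,r)$ on $V$ for the relevant constant $c$ (namely $c=1$ in (i) and $c=\lambda$ in (ii)); the strictness comes from varying the coordinate occupied by $f$ in the strict part of (inc), and in the borderline subcase $h(x)=0$ from $\varphi(r,0)<\varphi(r,c)$ and $\varphi(0,r)<\varphi(c,r)$. Thus the only genuine work is to control the two functions on $X\setminus V$, and here the two hypotheses play opposite roles.

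For part (i) I would fix $\eta>0$ so small that $\varphi(\|f\|_X,\eta)<\varphi(r,1)$ and $\varphi(\eta,\|f\|_X)<\varphi(1,r)$; this is possible because the additional condition $\varphi(t,0)=0=\varphi(0,t)$ together with continuity forces $\varphi(\|f\|_X,\eta)\to0$ and $\varphi(\eta,\|f\|_X)\to0$ as $\eta\to0$, while $\varphi(r,1)$ and $\varphi(1,r)$ are strictly positive (by (inc) and the vanishing on the axes). Using $x_0\in\delta(A)$, choose $h\in V_{x_0}(A)$ with $|h|<\eta$ on $X\setminus V$. Then for $x\in X\setminus V$ one has $\varphi(f(x),h(x))\le\varphi(\|f\|_X,\eta)<\varphi(r,1)$ and $\varphi(h(x),f(x))\le\varphi(\eta,\|f\|_X)<\varphi(1,r)$ by (inc), so combined with the estimates on $V$ the inequalities $\varphi(f(x),h(x))<\varphi(r,1)$ and $\varphi(h(x),f(x))<\varphi(1,r)$ hold for all $x\in X$. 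Lemma~\ref{lem1}(iv) then gives $\|\varphi(f,h)\|_X<\varphi(r,1)$ and $\|\varphi(h,f)\|_X<\varphi(1,r)$, whose sum is exactly $\rho_+(f,h)<\rho_+(r,1)$.

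For part (ii) the complementary estimate cannot be made small, so I would first use the growth hypothesis to choose the scale: since $\varphi(r,\lambda)\to\infty$ and $\varphi(\lambda,r)\to\infty$ as $\lambda\to\infty$, fix $\lambda>0$ with $\varphi(r,\lambda)>\varphi(\|f\|_X,0)$ and $\varphi(\lambda,r)>\varphi(0,\|f\|_X)$. With $\lambda$ now fixed, continuity lets me pick $\eta>0$ so small that $\varphi(\|f\|_X,\lambda\eta)<\varphi(r,\lambda)$ and $\varphi(\lambda\eta,\|f\|_X)<\varphi(\lambda,r)$, and then choose $h\in V_{x_0}(A)$ with $|h|<\eta$ on $X\setminus V$. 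For $x\in X\setminus V$ this yields $\varphi(f(x),\lambda h(x))\le\varphi(\|f\|_X,\lambda\eta)<\varphi(r,\lambda)$ and $\varphi(\lambda h(x),f(x))\le\varphi(\lambda\eta,\|f\|_X)<\varphi(\lambda,r)$, which together with the $V$-estimates give pointwise strict inequalities on all of $X$; Lemma~\ref{lem1}(iv) again upgrades them to $\rho_+(f,\lambda h)<\rho_+(r,\lambda)$. The step I expect to be most delicate is precisely this control on $X\setminus V$: in (i) one must verify that the axis-vanishing of $\varphi$ is quantitatively strong enough to beat the fixed positive numbers $\varphi(r,1),\varphi(1,r)$, and in (ii) one must commit to $\lambda$ \emph{before} $\eta$, so that the enlarged targets $\varphi(r,\lambda),\varphi(\lambda,r)$ already dominate $\varphi(\|f\|_X,0),\varphi(0,\|f\|_X)$ before $\eta$ is tuned to the chosen $\lambda$.
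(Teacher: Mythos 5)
Your proposal is correct and follows essentially the same route as the paper's proof: the same decomposition into a neighborhood where $|f|<|f(x_0)|+\epsilon$ and its complement, the same use of $x_0\in\delta(A)$ to make $h$ small off that neighborhood, and the same appeal to Lemma~\ref{lem1}(iii) and (iv) to pass from pointwise strict inequalities to strict norm inequalities. The only (harmless) cosmetic difference is in part (ii), where the paper anchors the choice of $\lambda$ to the fixed constant $\varphi(\|f\|_X,\|f\|_X)$ by taking $|h|<\|f\|_X/\lambda$ off the neighborhood, whereas you anchor it to $\varphi(\|f\|_X,0)$ and $\varphi(0,\|f\|_X)$ and then tune a separate $\eta$ after fixing $\lambda$ --- the same quantifier order and the same argument in substance.
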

\begin{proof}
(i) Since $\varphi(t,0)=\varphi(0,t)=0$ for all $t\ge 0$, it
follows from the continuity of $\varphi$ that there is a small enough
$0<\epsilon'<\epsilon $ such that $\epsilon' \|f\|_X<1$,
$\varphi(\|f\|_X,\epsilon'\|f\|_X)<\varphi(|f(x_0)|+\epsilon,1)
$ and
$\varphi(\epsilon'\|f\|_X,\|f\|_X)<\varphi(1,|f(x_0)|+\epsilon)
$. We should  note that
$\varphi(1,|f(x_0)|+\epsilon)>\varphi(0,0)\ge 0$. Let $U$ be an
open neighborhood of $x_0$ such that $|f(x)|< |f(x_0)|+\epsilon$
for all $x\in U$. Since $x_0\in\delta(A) $, we can find $ h\in A $
such that $h(x_0)=1=\|h\|_X $ and $ |h|<\epsilon'\|f\|_X $ on $
X\setminus U $. Using (inc), for each $x\in X\backslash U$, we have
\[\varphi(f(x),h(x))\le  \varphi(\|f\|_X,\epsilon'\|f\|_X)<\varphi(|f(x_0)|+\epsilon,1). \]
On the other hand, for each $x\in U$, using Lemma \ref{lem1}(iii), we
have
\[\varphi(f(x),h(x))<\varphi(|f(x_0)|+\epsilon ,1).\]
Therefore, by Lemma \ref{lem1}(iv),
$\|\varphi(f,h)\|_X<\varphi(|f(x_0)|+\epsilon,1)$. Similarly, it follows that
$\|\varphi(h,f)\|_X<\varphi(1,|f(x_0)|+\epsilon)$.
Hence
\[\rho_+(f,h) < \rho_+(|f(x_0)|+\epsilon,1),\]
as desired.

(ii)  Clearly, in this case, we can choose $\lambda>\|f\|_X $ such
that $\varphi(\|f\|_X,\|f\|_X) < \varphi(\lambda,
|f(x_0)|+\epsilon)$ and
$\varphi(\|f\|_X,\|f\|_X)<\varphi(|f(x_0)|+\epsilon, \lambda)$.
Let $U$ be an open neighborhood of $x_0$ such that
$|f(x)|<|f(x_0)|+\epsilon$ for all $x\in U$. Since
$x_0\in\delta(A) $, there exists $ h\in A $ such that $
h(x_0)=1=\|h\|_X $ and $ |h|<\frac{1}{\lambda}\|f\|_X $ on $
X\setminus U $. Hence for each $x\in U$, $ \varphi(f(x),\lambda
h(x))< \varphi(|f(x_0)|+\epsilon ,\lambda) $,     and for $
x\in X \backslash U $,
\[\varphi(f(x),\lambda h(x))\le \varphi(\|f\|_X,\|f\|_X)<
\varphi(|f(x_0)|+\epsilon ,\lambda).\] Therefore, by Lemma
\ref{lem1}(iv), we have $\|\varphi(f,\lambda h)\|_X<
\varphi(|f(x_0)|+\epsilon ,\lambda)$. Similarly, one can show
that $\|\varphi(\lambda
h,f)\|_X<\varphi(\lambda,|f(x_0)|+\epsilon)$. Hence
\[ \rho_+(f,\lambda h)<\rho_+(|f(x_0)|+\epsilon, \lambda).\qedhere\]
\end{proof}

\section{Additive  semigroups of continuous functions}
In this section, we assume that $\varphi: \BC \times \BC
\longrightarrow \BR^+$ is  a continuous map satisfying (inc) and also the following
condition:
\renewcommand{\labelitemi}{(con)}
\begin{itemize} \item
 For every $n\in\BN$ and $s_1,s_2,...,s_n,t\in
\BC$, $\varphi(\frac{1}{n}\Sigma_{i=1}^{n}s_i,t)\le
\frac{1}{n}\Sigma_{i=1}^{n}\varphi(s_i,t)$ and $\varphi(t,\frac{1}{n}\Sigma_{i=1}^{n}s_i)\le
\frac{1}{n}\Sigma_{i=1}^{n}\varphi(t,s_i)$.
\end{itemize}
\vspace*{.2cm} We also consider $\rho_+$ and $\rho_{{\rm max}}$ as in
Definition \ref{dfn} and study surjections whose domains are
certain additive semigroups of continuous functions and preserve
$\rho_+$ and $\rho_{{\rm max}}$.

 Before stating our result, we give some examples of
such two variable functions $\varphi$.
 \begin{exam}{\rm
{\rm (i)} For each $a,b>0$, the map $\varphi_{a,b}$, defined by
$\varphi_{a,b}(s,t)=a|s|+b|t|$ for $s,t\in \BC$, satisfies both
(inc) and (con).

{\rm (ii)} The map $\psi(s,t)=|st|$, $s,t\in
\BC$, satisfies (inc) and (con).

 {\rm (iii)} The sum $\varphi_1+\varphi_2$  of continuous
maps $\varphi_i: \BC \times \BC \longrightarrow \BR^+$, $i=1,2$,
satisfying (inc) and (con), again satisfies these conditions.

{\rm (iv)} The continuous maps $\varphi(s,t)= {\rm max}(|s|,|t|)+
|s|+|t|$ and $\psi(s,t)={\rm max}(|s|,|t|)+|st|$ satisfy (inc) and
(con). In general, if $\varphi, \psi: \BC \times \BC
\longrightarrow \BR^+$ are continuous maps such that $\varphi$ is
(not necessarily strictly) increasing in modulus, satisfying
(con), and $\psi$ satisfies both (inc) and (con), then
$\varphi+\psi$ satisfies both (inc) and (con). }
\end{exam}
Next theorem is our main result in this section.
\begin{theorem}\label{main1}
Let $\varphi: \BC \times \BC \longrightarrow \BR^+$ be a
continuous map satisfying {\rm (inc)} and {\rm (con)}. Let $X$ and
$Y$ be locally compact Hausdorff spaces, and $A$ and $B$ be either
subspaces of $C_0(X)$ and $C_0(Y)$, or  positive cones of some
subspaces of $C_0(X)$ and $C_0(Y)$, respectively. Suppose that
$\delta(A)=\Ch(\A)$ and $\delta(B)=\Ch(\B)$ for some function
algebras $ \A $ and $ \B$ on $X$ and $Y$, respectively, with $|A|
\subseteq |\A|$ and $|B|\subseteq |\B|$. Let $\rho\in \{\rho_+,
\rho_{\rm max}\}$ and $T: A\longrightarrow B$ be a surjective map
satisfying
\[ \rho(Tf,Tg)= \rho(f,g) \qquad (f,g\in A).  \]
Then $T$ induces a bijection $\Phi: \Ch(\B)\longrightarrow
\Ch(\A)$ between the Choquet boundaries of $\A$ and $\B$.
Moreover,

{\rm (i)} If $\rho=\rho_{\rm max}$, then $ \Phi $ is a
homeomorphism and $|Tf(y)|=|f(\Phi(y))|$ for all $f\in A$ and $y\in \Ch(\B)$.

{\rm (ii)} If $\rho= \rho_+$ and either
\begin{itemize}
\item[(a)] $\varphi(t,0)=0=\varphi(0,t)$ for all $t\ge 0$, or

\item[(b)] $\varphi(t,a)\to \infty$ and $\varphi(a,t)\to \infty$
as $t\to \infty$ for all $a>0$,
\end{itemize}
holds, then $ \Phi $ is a homeomorphism and
$|Tf(y)|=|f(\Phi(y))|$ for all $f\in A$ and $y\in\Ch(\B)$.
\end{theorem}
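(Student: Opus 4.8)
The plan is to use the invariant $\rho$ to reconstruct the modulus of each function at the strong boundary points, and then to transport this information through $T$ by means of peaking functions. I begin with two reductions. By Lemma \ref{lem1}(ii) both $\rho_+$ and $\rho_{\rm max}$ depend only on the moduli of their arguments and are symmetric. Putting $g=f$ in the hypothesis and using that $s\mapsto\varphi(s,s)$ is strictly increasing on $[0,\infty)$ (immediate from (inc)), together with $\|\varphi(f,f)\|_X=\varphi(\|f\|_X,\|f\|_X)$, forces $\|Tf\|_Y=\|f\|_X$ for every $f\in A$; in particular $T0=0$ and $T$ preserves the unit sphere.

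Next I record an exact pointwise recovery formula. Fix $x_0\in\delta(A)$ and $f\in A$, and set $c=\|f\|_X$. For $\rho_{\rm max}$, Lemma \ref{romax} (letting $\epsilon\downarrow 0$) gives $\inf\{\rho_{\rm max}(f,c\,h):h\in V_{x_0}(A)\}\le\rho_{\rm max}(|f(x_0)|,c)$, while evaluating $\varphi(f,c\,h)$ and $\varphi(c\,h,f)$ at $x_0$ (where $h(x_0)=1$) gives the reverse inequality for every $h$; hence
\[ \rho_{\rm max}(|f(x_0)|,c)=\inf\{\rho_{\rm max}(f,c\,h):h\in V_{x_0}(A)\}. \]
Since $a\mapsto\rho_{\rm max}(a,c)$ is strictly increasing on $[0,c]$ (again by (inc)), the scalar $|f(x_0)|$ is completely determined by the right-hand side. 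For $\rho_+$ the same scheme works, using Lemma \ref{ro+}(i) under hypothesis (a) and Lemma \ref{ro+}(ii) under hypothesis (b).

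The heart of the proof is the construction of $\Phi$. Fix $y_0\in\Ch(\B)=\delta(B)$ and $c>0$, and set $U^c_{y_0}=T^{-1}(c\,V_{y_0}(B))$, the collection of $v\in A$ whose image attains the value $c$ at $y_0$ with $\|Tv\|_Y=c=\|v\|_X$. For $v_1,v_2\in U^c_{y_0}$ one has $\rho(v_1,v_2)=\rho(Tv_1,Tv_2)$, and because $Tv_1(y_0)=Tv_2(y_0)=c$ this attains the maximal value $\rho(c,c)$; since $\varphi(c,c)>\varphi(0,0)$, the strict monotonicity in (inc) forces $\varphi(c,c)$ to be attained by $\varphi(v_1,v_2)$ or $\varphi(v_2,v_1)$ on the compact set where the moduli are large, whence $M(v_1)\cap M(v_2)\neq\emptyset$. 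Each $M(v)$ is compact (as $v\in C_0(X)$ and $c>0$), and through $|A|\subseteq|\A|$ it is an intersection of peak sets of $\A$; upgrading the pairwise intersection property to the finite intersection property by averaging inside the additively closed set $c\,V_{y_0}(B)$ (this is where (con) enters), and invoking that every nonempty intersection of peak sets of $\A$ meets $\Ch(\A)$, I obtain that $\bigcap_{v\in U^c_{y_0}}M(v)$ is nonempty and meets $\delta(A)$. That it is a single point $x_0$ follows from the separation of $\delta(A)$ by the peaking functions of $A$; I set $\Phi(y_0)=x_0$. Running the symmetric construction with $T^{-1}$ yields the inverse correspondence, so $\Phi:\Ch(\B)\to\Ch(\A)$ is a bijection.

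With $\Phi$ available the modulus identity follows from the recovery formula: surjectivity rewrites the $\B$-side formula as $\rho_{\rm max}(|Tf(y_0)|,c)=\inf\{\rho(f,v):v\in U^c_{y_0}\}$, and since every $v\in U^c_{y_0}$ has $x_0=\Phi(y_0)$ as a modulus-peak point, the bound $\rho(f,v)\ge\rho(|f(x_0)|,c)$ yields $|Tf(y_0)|\ge|f(\Phi(y_0))|$; the reverse inequality is exactly this statement for the inverse correspondence, so $|Tf(y_0)|=|f(\Phi(y_0))|$ (and analogously for $\rho_+$). Finally, the identity $|Tf|=|f\circ\Phi|$ on $\Ch(\B)$ makes $\Phi$ a homeomorphism: if $y_\alpha\to y_0$ then $|f(\Phi(y_\alpha))|=|Tf(y_\alpha)|\to|Tf(y_0)|=|f(\Phi(y_0))|$ for every $f\in A$, and since the sharply peaking functions of $A$ generate the topology of $\delta(A)$, this forces $\Phi(y_\alpha)\to\Phi(y_0)$; symmetry handles $\Phi^{-1}$. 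The main obstacle is the correspondence step: because $T$ is nonlinear the recovery formula cannot be transported term by term, and the real work is proving that the compact peak sets $\{M(v):v\in U^c_{y_0}\}$ enjoy the finite intersection property and collapse to a single strong boundary point --- precisely where $|A|\subseteq|\A|$, the peak-set intersection property of function algebras, and condition (con) are indispensable.
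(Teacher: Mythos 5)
Your architecture is essentially the paper's: norm preservation from $\rho(f,f)=\rho(Tf,Tf)$; nonemptiness of $\bigcap_{Tv\in cV_{y_0}(B)}M(v)$ via averaging in $B$ through surjectivity and condition (con) (the paper's Lemma \ref{cap}(ii)); passage to the Choquet boundary through $|A|\subseteq|\A|$ and the peak-set intersection property (Lemma \ref{int}); the modulus identity via the extremal functions of Lemmas \ref{romax} and \ref{ro+} (your ``recovery formula'' $\rho_{\rm max}(|f(x_0)|,c)=\inf\{\rho_{\rm max}(f,ch):h\in V_{x_0}(A)\}$ is a clean repackaging of the contradiction arguments in Lemmas \ref{romaxequ} and \ref{ro+equ}); and the homeomorphism from $|Tf|=|f\circ\Phi|$. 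Two remarks on the intersection step: your pairwise observation $M(v_1)\cap M(v_2)\neq\emptyset$ is correct but carries no weight, since the finite intersection property is not an ``upgrade'' of pairwise intersection; as in Lemma \ref{cap}(ii) it must be proved directly, by showing that a Choquet-boundary maximizer of a preimage of the average $\frac1n\sum_i Tv_i$ is a common maximizer of all the $v_i$, using an auxiliary peaking function at that point and the transferred strict inequality $\rho(Tv_j,Th')<\rho(c,1)$. You gesture at this but do not carry it out.

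The genuine gap is the two-sided reciprocity --- the paper's Lemma \ref{iff} ($x_0\in I^1_{y_0}$ if and only if $y_0\in J^s_{x_0}$ for every $s>0$) and its consequence Lemma \ref{singleton} --- which you never prove yet use silently in three places. First, singleton-ness: if $x_0\neq x_1$ both lie in $I^c_{y_0}\cap\Ch(\A)$, separation by peaking functions gives $f\in V_{x_0}(A)$ and $g\in V_{x_1}(A)$ with $\rho(f,g)<\rho(1,1)$, but the contradiction requires knowing $|Tf(y_0)|=|Tg(y_0)|=1$, i.e.\ $y_0\in J^1_{x_0}\cap J^1_{x_1}$; one cannot argue inside $U^c_{y_0}$ alone, since a function may attain its maximum modulus at both points and you have no pointwise control over preimages under the nonlinear surjection $T$. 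Second, independence of $\Phi(y_0)$ of the radius: your modulus identity uses $U^c_{y_0}$ with $c=\|f\|_X$ varying with $f$ (and in case (ii)(b) the radius $\lambda$ of Lemma \ref{ro+}(ii) depends on $f$ and $\epsilon$), so you need $\Phi(y_0)\in I^c_{y_0}$ for all $c>0$, which is exactly the content of Lemma \ref{singleton}. Third, the reverse inequality: the symmetric construction produces some map $\Psi:\Ch(\A)\to\Ch(\B)$, but the step ``the reverse inequality is exactly this statement for the inverse correspondence'' presupposes $\Psi(\Phi(y_0))=y_0$, equivalently $y_0\in J^c_{\Phi(y_0)}$, so that $|T(ch)(y_0)|=c$ for every $h\in V_{\Phi(y_0)}(A)$; that $\Psi=\Phi^{-1}$ is precisely the reciprocity claim and does not follow from symmetry. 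The paper closes all three with the peaking-function transfer arguments of Lemmas \ref{iff} and \ref{singleton}; without them your construction yields only, for each fixed $c$, a nonempty (not necessarily singleton) set of candidates for $\Phi(y_0)$ and the one-sided bound $|Tf(y_0)|\ge|f(x)|$ for $x$ in that set.
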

We prove the theorem through the subsequent lemmas.

In what follows, we assume that $X,Y$ and $A,B$ are as in Theorem
\ref{main1}  and $T:A\longrightarrow B$ is a surjection satisfying
\[ \rho(Tf,Tg)= \rho(f,g) \qquad (f,g\in A),  \]
where $\rho \in \{\rho_+,\rho_{\rm max}\}$.

\begin{lem}\label{cap}
{\rm (i)} For each $f\in A$, we have $\|Tf\|_Y=\|f\|_X$.

{\rm (ii)} Let $r>0$. Then for any convex subset $C$ of the sphere
$S_r(A)=\{f\in A: \|f\|_X=r\}$, we have  $\cap_{f\in C} M(Tf)\neq
\emptyset$. Similarly,
 for any convex subset $C'$ of the sphere $S_r(B)=\{g\in B: \|g\|_Y=r\}$, we have  $\cap_{Tf\in C'} M(f)\neq \emptyset$.
\end{lem}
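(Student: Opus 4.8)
The plan is to treat the two parts in turn, getting a norm identity first and then a finite–intersection property for the maximizing sets.

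For (i), I would reduce everything to a single real variable. Put $\psi(t)=\varphi(t,t)$ for $t\ge 0$; by Lemma \ref{lem1}(ii) and chaining the two monotonicities in (inc) (first slot weakly, then second slot strictly since the first argument is nonzero), $\psi$ is continuous and strictly increasing. Since each $f\in C_0(X)$ attains its norm, $\|\varphi(f,f)\|_X=\sup_x\psi(|f(x)|)=\psi(\|f\|_X)$, so $\rho_{\rm max}(f,f)=\psi(\|f\|_X)$ and $\rho_+(f,f)=2\psi(\|f\|_X)$; in either case $\rho(f,f)$ is a strictly increasing, hence injective, function of $\|f\|_X$. Applying the hypothesis to the pair $(f,f)$ gives $\psi(\|Tf\|_Y)=\psi(\|f\|_X)$ (up to the fixed constant), whence $\|Tf\|_Y=\|f\|_X$.

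For (ii), first note by (i) that every $f\in S_r(A)$ has $\|Tf\|_Y=r>0$, so $M(Tf)\subseteq\{\,|Tf|\ge r/2\,\}$ is compact; it therefore suffices to prove the finite–intersection property. Given $f_1,\dots,f_n\in C$, set $\bar f=\tfrac1n\sum_i f_i$, which lies in $C$ by convexity, so $\|\bar f\|_X=r$. Because $|B|\subseteq|\B|$ and $\Ch(\B)$ is a boundary for $\B$, the compact set $M(T\bar f)$ meets $\Ch(\B)=\delta(B)$; fix $y_0\in M(T\bar f)\cap\delta(B)$, and aim to show $y_0\in\bigcap_iM(Tf_i)$. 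The heart of the argument is a peaking–function test. For a neighbourhood $V$ of $y_0$ and small $\epsilon>0$, use $y_0\in\delta(B)$ together with surjectivity to choose $w=Tg\in B$ with $|w(y_0)|=1=\|w\|_Y$ and $|w|<\epsilon$ off $V$; by (i), $\|g\|_X=1$. Evaluating at $y_0$ bounds $\rho(T\bar f,w)$ below by the relevant combination of $\varphi(r,1)$ and $\varphi(1,r)$ (their sum for $\rho_+$, their maximum for $\rho_{\rm max}$), while the pointwise bounds $|\bar f|\le r$, $|g|\le 1$ give the reverse inequality; hence $\rho(\bar f,g)$ equals that combination exactly, and accordingly at least one slot attains its bound, say $\|\varphi(\bar f,g)\|_X=\varphi(r,1)$. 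Feeding this into the averaging inequality furnished by (con), namely $\|\varphi(\bar f,g)\|_X\le\tfrac1n\sum_i\|\varphi(f_i,g)\|_X\le\varphi(r,1)$, forces equality in every summand, so $\|\varphi(f_i,g)\|_X=\varphi(r,1)$ for all $i$ (or the analogous identity in the other slot). Transferring back through the norm identity yields $\|\varphi(Tf_i,w)\|_Y=\varphi(r,1)$ for each $i$, and since $|w|<\epsilon$ off $V$ this supremum must be attained inside $V$; thus $M(Tf_i)\cap V\neq\emptyset$. Letting $V$ shrink to $y_0$ and using that $M(Tf_i)$ is closed gives $y_0\in M(Tf_i)$ for every $i$. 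This establishes the finite–intersection property, and compactness yields $\bigcap_{f\in C}M(Tf)\neq\emptyset$; the second assertion follows by the symmetric argument with $(X,A)$ and $(Y,B)$ interchanged, using surjectivity of $T$ and a peaking function at a point of $M(f_0)\cap\delta(A)$, where $f_0$ is a preimage of the average of the chosen elements of $C'$.

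The step I expect to be the main obstacle is exactly this transfer of alignment to the range. Since $T$ is not assumed linear, $T\bar f\neq\tfrac1n\sum_iTf_i$, so the clean domain fact that all $f_i$ coincide at any maximizer of $\bar f$ cannot be pushed to $Y$ directly; the only information crossing the map is the single scalar identity $\rho(\bar f,g)=\rho(T\bar f,w)$. The device that converts this one identity into the full family of per-index equalities is to test against functions peaking arbitrarily sharply at the boundary maximizer $y_0$ and to exploit the convexity estimate (con). The extra subtlety for $\rho_{\rm max}$ is that only the maximum of the two slots $\|\varphi(\cdot,\cdot)\|$ is preserved, so one must first determine which slot realizes the value before running the averaging equality; once that is fixed, the shrinking–neighbourhood argument closes both cases uniformly.
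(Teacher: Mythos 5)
Your proposal is correct, and part (i) is essentially the paper's own argument (the paper phrases it as $\|\varphi(f,f)\|_X=\varphi(\|f\|_X,\|f\|_X)$ and compares evaluations at points of $M(f)$ and $M(Tf)$; your $\psi(t)=\varphi(t,t)$ is the same computation). For part (ii), however, you take a genuinely different logical route built from the same raw materials (the convex average $\bar f=\frac{1}{n}\sum_i f_i$, a point $y_0\in\Ch(\B)$ maximizing $|T\bar f|$ via the boundary property and $|B|\subseteq|\B|$, a peaking function at $y_0$ pulled back through surjectivity, and the averaging condition (con)). The paper argues by contradiction: assuming $|Tf_j(y_0)|<r$, it picks the \emph{one} neighborhood $V$ adapted to that failure (where $|Tf_j|<r$), uses Lemma \ref{lem1}(iii)--(iv) to get the strict bound $\rho(Tf_j,Th')<\rho(r,1)$, transports it to the domain, averages with (con) to get $\rho(h,h')<\rho(r,1)$, and transports back to contradict evaluation at $y_0$; only strict upper bounds are ever needed, never attained suprema. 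You instead run a direct rigidity argument over \emph{all} small neighborhoods: two-sided bounds pin $\rho(\bar f,g)$ at its extreme value, equality in the (con) average pins each $\|\varphi(f_i,g)\|_X$, and a localization step places $y_0$ in every $M(Tf_i)$. This is valid, and more quantitative, but carries two obligations the paper's route avoids and which you should discharge explicitly: (1) for $\rho_+$ the back-transfer $\rho_+(f_i,g)=\rho_+(r,1)$ needs \emph{both} slot equalities $\|\varphi(f_i,g)\|_X=\varphi(r,1)$ and $\|\varphi(g,f_i)\|_X=\varphi(1,r)$ (both do follow, since the sum of the two sub-maximal terms equals the sum of their bounds, and (con) applies in each variable separately), while for $\rho_{\rm max}$ the slot ambiguity you flag only arises when $\varphi(r,1)=\varphi(1,r)$ and either branch closes symmetrically; (2) the claim $M(Tf_i)\cap V\neq\emptyset$ is not literally immediate from $\sup_V\varphi(Tf_i,w)=\varphi(r,1)$ — take $V$ with compact closure (possible by local compactness of $Y$), note that off $V$ strictness in (inc) gives $\varphi(Tf_i,w)\le\varphi(r,\epsilon)<\varphi(r,1)$, so the maximum over $\overline{V}$ is attained at some $y^*\in V$, where strict monotonicity forces $|Tf_i(y^*)|=r$; then closedness of $M(Tf_i)$ yields $y_0\in M(Tf_i)$ as you say. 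With these details filled in, your argument, including the symmetric treatment of $C'$ via a preimage of the average (where the (con) averaging is performed in $Y$ rather than in $X$), is a sound alternative to the paper's contradiction-based proof.
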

\begin{proof}
 (i) For each $f\in A$, it follows from (inc) that  $\|\varphi(f,f)\|_X=\varphi(\|f\|_X,\|f\|_X)$ and $\|\varphi(Tf,Tf)\|_Y=\varphi(\|Tf\|_Y,\|Tf\|_Y)$. Hence
 for arbitrary $x_0\in M(f)$ and $y_0\in M(Tf)$, we have
 \[ \rho(|Tf(y_0)|,|Tf(y_0)|)=\rho(Tf,Tf)=\rho(f,f)=\rho(|f(x_0)|, |f(x_0)|).\]
 Thus $|Tf(y_0)|=|f(x_0)|$, and consequently $\|Tf\|_Y =\|f\|_X$.

 (ii) Let $C$ be a convex subset of $S_r(A)$. It suffices to show that the family $\{M(Tf): f\in C\}$ of compact subsets of $Y$
 has finite intersection property. Let $f_1,..., f_n\in C$. Since $C$ is a convex subset of $S_r$,
 we have $h= \frac{1}{n} \Sigma_{i=1}^{n} f_i\in C$.
 By (i), $\|Th\|_Y=\|h\|_Y=r$.  Since $ \Ch(\B) $ is a boundary for $ \B $ and $|B|\subseteq |\B|$, we can choose $y_0\in \Ch(\B)$ such that $|Th(y_0)|=r=\|h\|_X$.  We claim that $y_0\in M(Tf_i)$ for
$i=1,...,n$. Assume on the contrary that $|Tf_j(y_0)|<r$ for some
$1\le j\le n$. Then there exists a neighborhood $V$ of $y_0$ such
that $|Tf_j|<r$ on $V$. Since $y_0\in \delta(B)$, we can find
$h'\in A$ such that $Th'(y_0)=1=\|Th'\|_Y$ and $|Th'|< 1$ on $Y \backslash
V$. Since $ |Tf_j(y)|<r $ and $ |Th'(y)|\le1 $ for all $ y\in V $, and $ |Tf_j(y)|\le r $ and $ |Th'(y)|<1 $ for all $ y\in Y \backslash V $, it follows by Lemma \ref{lem1}(iii) that $\varphi(Tf_j(y), Th'(y))<
\varphi(r,1)$  and $\varphi(Th'(y),Tf_j(y))< \varphi(1,r)$ hold
for all $y\in Y$. Hence according to Lemma \ref{lem1}(iv), we have $\|\varphi(Tf_j, Th')\|_Y< \varphi(r,1)$
and $\|\varphi(Th',Tf_j)\|_Y< \varphi(1,r)$, that is
$\rho(Tf_j,Th')< \rho(r,1)$. The  hypotheses imply that
\[  \rho(f_j,h') < \rho( r,1).\]
Thus, for each $ x\in X $, at least one of the inequalities
$|f_j(x)|<r $ and $|h'(x)|<1$ holds. Now it follows from (con) that
for each $x\in X$,
\begin{eqnarray*}
\varphi(h(x),h'(x)) &\le& \frac{1}{n} \varphi(f_j(x),
h'(x))+\frac{1}{n} \Sigma_{i\neq j} \varphi(f_i(x), h'(x))\\
&<& \frac{1}{n} \varphi(r,1)+
\frac{n-1}{n}\varphi(r,1)=\varphi(r,1)
\end{eqnarray*}
and similarly $ \varphi(h'(x),h(x))<\varphi(1,r) $. Hence
\begin{align*}
    \rho(Th,Th')=\rho(h,h')<\rho(r,1),
\end{align*}
and consequently
\[ \rho(r,1)=\rho(Th(y_0),Th'(y_0))\le \rho(Th,Th')<\rho(r,1) \]
which is a contradiction. This argument shows that $y_0\in \cap_{i=1}^n M(Tf_i)$, as desired.

The other part is similarly proven.
\end{proof}

In this section, for $y_0\in \Ch(\B)$ and $r>0$, we set $I^r_{y_0}= \cap_{Tf\in
rV_{y_0}(B)}M(f)$. Similarly, for $x_0\in \Ch(\A)$ and $r>0$, we set $J^r_{x_0}=\cap_{f\in rV_{x_0}(A)} M(Tf)$.

\begin{lem}\label{int}
Let $y_0\in \Ch(\B)$ and $x_0\in \Ch(\A)$. Then for each $r>0$, $I^r_{y_0} \cap \Ch(\A)\neq
\emptyset$ and $J^r_{x_0}\cap \Ch(\B) \neq \emptyset$.
\end{lem}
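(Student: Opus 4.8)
The plan is to first establish that both sets are nonempty using the finite‑intersection machinery already developed, and then to realize them as intersections of peak sets so as to invoke the peak‑set property of the ambient algebras $\A$ and $\B$.

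First I would prove nonemptiness. Since $V_{y_0}(B)$ consists of functions with $g(y_0)=1=\|g\|_Y$, the set $rV_{y_0}(B)=\{rg:g\in V_{y_0}(B)\}$ is a convex subset of the sphere $S_r(B)$: for $g_1,g_2\in rV_{y_0}(B)$ and $0\le\lambda\le1$, the value of $\lambda g_1+(1-\lambda)g_2$ at $y_0$ equals $r$, which forces its norm to be exactly $r$, and membership in $B$ is clear whether $B$ is a subspace or a positive cone. Hence the second part of Lemma \ref{cap}(ii), applied with $C'=rV_{y_0}(B)$, gives $I^r_{y_0}=\cap_{Tf\in rV_{y_0}(B)}M(f)\neq\emptyset$. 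Symmetrically $rV_{x_0}(A)$ is a convex subset of $S_r(A)$, so the first part of Lemma \ref{cap}(ii) yields $J^r_{x_0}\neq\emptyset$. I would also record that each $M(f)$ is compact (since $f\in C_0(X)$ and $r>0$, the set $\{|f|\ge r\}$ is compact), so these intersections are compact and the whole family $\{M(f):Tf\in rV_{y_0}(B)\}$ has the finite‑intersection property, again by Lemma \ref{cap}(ii) applied to finite convex hulls inside $rV_{y_0}(B)$.

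Next I would pass to the Choquet boundary. Recall that $\Ch(\A)=\delta(A)$ and that, for a function algebra, every nonempty intersection of peak sets of $\A$ meets $\Ch(\A)$; this is the tool that copes with the possible non‑compactness of $\Ch(\A)$, for which a bare finite‑intersection argument over $\Ch(\A)$ alone would not suffice. The goal is therefore to exhibit $I^r_{y_0}$ as a nonempty intersection of peak sets of $\A$. Using $|A|\subseteq|\A|$, each $f$ with $Tf\in rV_{y_0}(B)$ yields $\tilde f\in\A$ with $|\tilde f|=|f|$, hence $M(f)=M(\tilde f)$. Since $y_0\in\delta(B)$ is a strong boundary point, for every neighbourhood $V$ of $y_0$ and every $\varepsilon>0$ there is $g\in rV_{y_0}(B)$ with $|g|<\varepsilon$ off $V$; by surjectivity of $T$ these $g$ are of the form $Tf$, and the associated $\tilde f\in\A$ peak ever more sharply. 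I would use these, together with the averaging permitted by convexity (which keeps us inside $rV_{y_0}(B)$ and so preserves the finite‑intersection property), to produce for each such $f$ a peaking function of $\A$ whose peak set is sandwiched between $I^r_{y_0}$ and $M(\tilde f)$.

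The main obstacle is precisely this last realization: a single maximizing set $M(\tilde f)$ is in general \emph{not} a peak set (for the disc algebra the maximizing set of the coordinate function is the whole unit circle, which is not a $p$‑set), so one cannot treat the members of the intersection individually. The argument must exploit the entire convex family $rV_{y_0}(B)$ at once: as $g$ ranges over functions peaking sharply at $y_0$, the intersection of the corresponding peak sets of $\A$ contracts onto $I^r_{y_0}$ while remaining a nonempty intersection of peak sets by the finite‑intersection property coming from Lemma \ref{cap}(ii). Once $I^r_{y_0}$ is so expressed, the quoted peak‑set property furnishes a point of $\Ch(\A)$ lying in $I^r_{y_0}$, as required. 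The assertion for $J^r_{x_0}$ follows by the same reasoning with the roles of $(A,\A,X)$ and $(B,\B,Y)$ interchanged, now invoking the first part of Lemma \ref{cap}(ii).
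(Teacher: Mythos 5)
Your first step is correct and identical to the paper's: $rV_{y_0}(B)$ is a convex subset of $S_r(B)$, so Lemma \ref{cap}(ii) gives $I^r_{y_0}\neq\emptyset$, and symmetrically for $J^r_{x_0}$. The second half, however, has a genuine gap, and your diagnosis of the ``main obstacle'' is in fact mistaken. You correctly note that a maximum modulus set $M(\tilde f)$ need not itself be a peak set, but you conclude that one therefore ``cannot treat the members of the intersection individually'' and must instead run a contraction argument over the whole family $rV_{y_0}(B)$ at once. That argument is never actually carried out: you construct no peak set at any point, and the finite-intersection property you invoke from Lemma \ref{cap}(ii) concerns the sets $M(f)$, not the hypothetical peak sets, so nothing in your sketch shows that a family of peak sets with nonempty intersection inside $I^r_{y_0}$ exists. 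Your auxiliary remark that functions $Tf\in rV_{y_0}(B)$ peaking sharply at $y_0$ force the associated $\tilde f\in\A$ to ``peak ever more sharply'' is also unjustified at this stage: before Lemmas \ref{romaxequ} and \ref{ro+equ} there is no known relation between where $Tf$ attains its maximum modulus and where $f$ does --- establishing such a relation is the whole point of this chain of lemmas.

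The missing idea, which is exactly how the paper closes the argument, is the standard fact that in a function algebra every maximum modulus set contains a peak set \emph{through each of its points}: if $g\in\A$ and $x\in M(g)$, then the level set $\{x'\in X: g(x')=g(x)\}$ is a peak set of $\A$ contained in $M(g)$ (it is the peak set of $u=\frac{1}{2}(h+h^2)$, where $h=\overline{g(x)}\,g/\|g\|_X^2$). With this fact the individual treatment you dismissed works immediately: fix $x_0^r\in I^r_{y_0}$ from your nonemptiness step; for each $f$ with $Tf\in rV_{y_0}(B)$ choose $\tilde f\in\A$ with $|\tilde f|=|f|$ (so $M(f)=M(\tilde f)$) and a peak set $E_f$ of $\A$ with $x_0^r\in E_f\subseteq M(\tilde f)$. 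Then $\bigcap_f E_f$ is a nonempty intersection of peak sets of $\A$ (it contains $x_0^r$) lying inside $I^r_{y_0}$, and the quoted peak-set property of function algebras yields a point of $\Ch(\A)$ in $I^r_{y_0}$. Note that your own disc-algebra example illustrates the resolution rather than the obstacle: the circle $M(z)$ is not a peak set, but every point of it is itself a peak set contained in $M(z)$.
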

\begin{proof}
We prove the first assertion, the second one is proven in a
similar manner. Let $y_0\in \Ch(B)$. Since for each $r>0$, the set
$rV_{y_0}(B)$ is a convex subset of $S_r(B)=\{g\in B:
\|g\|_Y=r\}$, it follows from Lemma \ref{cap}(ii) that
$I^r_{y_0}\neq \emptyset$. Now, let $ x_0^r\in I^r_{y_0} $. Then
for each $f\in A$ with $Tf\in rV_{y_0}(B)$, we have $x_0^r\in
M(f)$. By assumption, for each $f\in A$ there exists $g\in \A$
with $|f|=|g|$, which implies that $M(f)=M(g)$. The maximum
modulus set $M(g)$ of $g$ contains a peak set of the function
algebra $\A$ containing $x_0^r$, hence using the fact that any
nonempty intersection of peak sets of $\A$ intersects $\Ch(\A)$,
we get $I^r_{y_0} \cap \Ch(\A)\neq\emptyset$.
\end{proof}

\begin{lem}\label{iff}
Let $x_0\in \Ch(\A)$ and $y_0\in \Ch(\B)$. Let $s>0$ be given. Then

{\rm (i)} $x_0\in I^1_{y_0}$ if and only if $y_0\in J^s_{x_0}$,

{\rm (ii)} $y_0\in J^1_{x_0}$ if and only if $x_0\in I^s_{y_0}$.
\end{lem}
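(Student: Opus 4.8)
The plan is to prove (i) in full and to obtain (ii) from the symmetric argument in which the roles of $(X,\A,A)$ and $(Y,\B,B)$ (and correspondingly of $I$ and $J$) are interchanged, using the ``other part'' of Lemmas \ref{cap} and \ref{int}. For (i) I would treat both implications by contraposition, with a single construction that works uniformly for $\rho\in\{\rho_+,\rho_{\rm max}\}$. The engine is that $x_0\in\delta(A)$ and $y_0\in\delta(B)$ supply functions peaking as sharply as we wish at $x_0$ and at $y_0$; rescaling such a unit peaking function by a positive scalar keeps it in $A$ (resp. $B$), since these are cones or subspaces, so I can place a sharp peak at whichever level the relevant set $I^1_{y_0}$ or $J^s_{x_0}$ demands.

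For $x_0\in I^1_{y_0}\Rightarrow y_0\in J^s_{x_0}$ I would assume $y_0\notin J^s_{x_0}$, i.e. there is $f\in sV_{x_0}(A)$ with $|Tf(y_0)|<s=\|Tf\|_Y$. Since $y_0\in\delta(B)$, on the open set $W=\{y:|Tf(y)|<|Tf(y_0)|+\epsilon\}$ I choose $q\in V_{y_0}(B)$ with $|q|<\delta$ off $W$. Splitting over $W$ and $Y\setminus W$ and invoking {\rm (inc)} together with Lemma \ref{lem1}(iii),(iv) gives, for $\epsilon,\delta$ small, $\|\varphi(Tf,q)\|_Y<\varphi(s,1)$ and $\|\varphi(q,Tf)\|_Y<\varphi(1,s)$, hence $\rho(Tf,q)<\rho(s,1)$. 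Writing $q=Tg$ by surjectivity and applying the norm condition, $\rho(f,g)<\rho(s,1)$; the elementary observation (part (ii) of the lemma following Definition \ref{dfn}) that $\rho(f,g)<\rho(s,1)$ forces $|f(x)|<s$ or $|g(x)|<1$ at each $x$, evaluated at $x_0$ where $|f(x_0)|=s$, yields $|g(x_0)|<1=\|g\|_X$ (using $\|g\|_X=\|q\|_Y=1$ from Lemma \ref{cap}(i)). Thus $g$ witnesses $x_0\notin I^1_{y_0}$.

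The reverse implication is the mirror image: assuming $x_0\notin I^1_{y_0}$ I fix $g$ with $Tg\in V_{y_0}(B)$ and $|g(x_0)|<1=\|g\|_X$, pick a sharp peak $p\in V_{x_0}(A)$ with $|p|<\delta$ off $V=\{x:|g(x)|<|g(x_0)|+\epsilon\}$, and form the level-$s$ function $sp\in sV_{x_0}(A)$. The same regional estimate gives $\rho(sp,g)<\rho(s,1)$, whence $\rho(T(sp),Tg)<\rho(s,1)$; now the elementary observation evaluated at $y_0$, where $Tg(y_0)=1$, forces $|T(sp)(y_0)|<s=\|T(sp)\|_Y$, so $sp$ witnesses $y_0\notin J^s_{x_0}$.

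The step I expect to be the crux is the bookkeeping of the two different levels $1$ and $s$. The natural \emph{symmetric} estimate $\rho(\cdot,\cdot)<\rho(r,r)$ only produces a common maximum–modulus point of the two functions somewhere on $X$ (or $Y$), which need not be $x_0$ (or $y_0$); this is what blocks a naive ``$I^r\leftrightarrow J^r$ plus level–independence'' approach and keeps the argument circular. The fix is to retain the \emph{asymmetric} comparison $\rho(\cdot,\cdot)<\rho(s,1)$, matching the level of the manufactured peak to the set being contradicted and the level of the given function to its own, so that the elementary observation can be read off at the distinguished point — $y_0$, where the given image equals $1$, or $x_0$, where the given function equals $s$ — pinning the drop in modulus exactly there. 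Verifying the two regional inequalities that yield $\rho(\cdot,\cdot)<\rho(s,1)$, which is precisely where {\rm (inc)}, its strictness for a nonzero fixed argument, and Lemma \ref{lem1}(iv) enter, is the only genuine computation, and I expect it to go through for both $\rho_+$ and $\rho_{\rm max}$ without appealing to the auxiliary hypotheses (a),(b) of Lemma \ref{ro+}.
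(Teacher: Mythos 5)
Your proposal is correct and follows essentially the same route as the paper: peaking functions from $\delta(A)$ and $\delta(B)$ (pulled back through the surjection when needed on the $B$ side), the asymmetric comparison $\rho(\cdot,\cdot)<\rho(s,1)$ via Lemma \ref{lem1}(iii),(iv), and evaluation at the distinguished point to pin the modulus drop there. The only differences are cosmetic: you argue by contraposition and invoke part (ii) of the unnumbered observation lemma after Definition \ref{dfn}, where the paper argues by contradiction through the inequality $\rho(s,1)=\rho(f(x_0),h(x_0))\le\rho(f,h)$, and your observation that hypotheses (a),(b) are not needed here matches the paper.
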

\begin{proof}
(i) Assume first that  $x_0\in I^1_{y_0}$ and $y_0\notin
J^s_{x_0}$. Then, by the definition of $J^s_{x_0}$,
there exists $f\in sV_{x_0}(A)$ such that $|Tf(y_0)|<s$. We note
that $\|Tf\|_Y=\|f\|_X=s$. Since $ Tf $ is continuous and $|Tf(y_0)|<s$,
we can choose a neighborhood $U$
of $y_0$ in $Y$ such that $|Tf|< s$ on $U$. Since $y_0\in
\Ch(\B)=\delta(B)$ and $T$ is surjective, we can find a function
$h\in A$ with $Th\in V_{y_0}(B)$ such that $|Th|<1$ on $Y\backslash U$.
Thus for each $y\in Y$, at least one of the inequalities
$|Th(y)|<1$ and $|Tf(y)|<s$ holds. This easily implies, by Lemma \ref{lem1}(iii), that for each
$y\in Y$, we have $\varphi(Tf(y), Th(y))<\varphi(s,1)$, and using
Lemma \ref{lem1}(iv), we get $\|\varphi(Tf,Th)\|_Y< \varphi(s,1)$. Similar
argument shows that $\|\varphi(Th,Tf)\|_Y< \varphi(1,s)$, and
consequently
\[\rho(f,h)=\rho(Tf,Th)< \rho(s,1).\]
On the other hand, since $Th\in V_{y_0}(B)$ and $x_0\in I^1_{y_0}$, it follows that $|h(x_0)|=1=\|h\|_X$. Therefore,
\[\rho(s,1)=\rho(|f(x_0)|,|h(x_0)|)=\rho(f(x_0),h(x_0))\le \rho(f,h)< \rho(s,1),\]
a contradiction. The other implication is similarly proven.

(ii) It is proven by a similar argument in (i).
\end{proof}

\begin{lem}\label{singleton}
For each $y_0\in \Ch(\B)$,  there exists a point $x_0\in
\Ch(\A)$ such that for all $r>0$, $I_{y_0}^r \cap \Ch(\A)=\{x_0\}$
and $J_{x_0}^r \cap \Ch(\B)=\{y_0\}$.
\end{lem}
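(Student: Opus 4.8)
The plan is to exploit the previous lemmas to pin down a single point of $\Ch(\A)$ associated to $y_0$, and then show it works uniformly for every radius $r$. First I would fix $y_0 \in \Ch(\B)$ and apply Lemma \ref{int} with $r=1$ to obtain a point $x_0 \in I^1_{y_0} \cap \Ch(\A)$; this is the candidate for the point we seek. The goal is then twofold: (a) that $x_0$ is the \emph{only} point of $\Ch(\A)$ lying in $I^r_{y_0}$ for every $r>0$, and (b) the symmetric statement that $J^r_{x_0} \cap \Ch(\A) = \{y_0\}$ for every $r>0$.

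The key device is Lemma \ref{iff}, which converts membership $x_0 \in I^1_{y_0}$ into $y_0 \in J^s_{x_0}$ for \emph{all} $s>0$, and vice versa. So starting from $x_0 \in I^1_{y_0}$, Lemma \ref{iff}(i) immediately gives $y_0 \in J^s_{x_0}$ for every $s>0$; in particular $y_0 \in J^r_{x_0} \cap \Ch(\B)$ for all $r$, establishing that this intersection is nonempty and contains $y_0$. Next I would run Lemma \ref{iff}(ii) in reverse: since $y_0 \in J^1_{x_0}$ (the $s=1$ case just obtained), we get $x_0 \in I^s_{y_0}$ for every $s>0$, so $x_0 \in I^r_{y_0} \cap \Ch(\A)$ for all $r$. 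Thus $x_0$ sits in every $I^r_{y_0}$ and $y_0$ sits in every $J^r_{x_0}$; what remains is uniqueness.

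For uniqueness, suppose $x_1 \in I^r_{y_0} \cap \Ch(\A)$ for some $r>0$; I must show $x_1 = x_0$. Applying Lemma \ref{iff}(ii) to $x_1$ (with $s$ chosen as $1/r$ after rescaling, or more directly noting the biconditional holds for the given $s$), membership $x_1 \in I^s_{y_0}$ yields $y_0 \in J^1_{x_1}$, and then Lemma \ref{iff}(i) in the other direction gives $x_1 \in I^1_{y_0}$. So every candidate point reduces to the radius-$1$ case, and it suffices to prove that $I^1_{y_0} \cap \Ch(\A)$ is a singleton. Here I would invoke the separation property of strong boundary points recorded in the preliminaries: for distinct $x_0, x_1 \in \delta(\A) = \Ch(\A)$, neither of the inclusions $V_{x_0}(\A) \subseteq V_{x_1}(\A)$ can hold. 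Concretely, if both $x_0$ and $x_1$ lay in $I^1_{y_0}$, then every $f$ with $Tf \in V_{y_0}(B)$ attains its norm in modulus at both points; translating this through $|A| \subseteq |\A|$ and the peak-set argument of Lemma \ref{int} should force the peaking functions at $x_0$ and $x_1$ to coincide on the relevant family, contradicting the distinctness of strong boundary points.

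The main obstacle I anticipate is precisely this last uniqueness step. The lemmas up to this point only guarantee \emph{nonempty} intersections with $\Ch(\A)$; squeezing these down to a single point requires using the strong-boundary separation in the right form, and the subtlety is that the functions controlling $I^1_{y_0}$ live in $B$ (via $T$) while the separation property lives in $\A$. Bridging these through the correspondence $x \in I^1_{y_0} \iff y_0 \in J^s_x$ and the inclusion $|A| \subseteq |\A|$—so that maximum-modulus sets in $A$ match those of functions in $\A$—is where the real care is needed, and I expect the proof to lean on constructing a specific $Tf \in V_{y_0}(B)$ that peaks at one of the two putative points but not the other.
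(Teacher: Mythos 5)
Your first half is exactly the paper's argument: choosing $x_0\in I^1_{y_0}\cap\Ch(\A)$ via Lemma \ref{int} and shuttling through Lemma \ref{iff} to get $y_0\in J^r_{x_0}$ for all $r>0$ and to reduce every $I^r_{y_0}\cap\Ch(\A)$ to the radius-one case coincides with the paper step by step. The genuine gap is the uniqueness step, which you yourself flag as unresolved, and the strategy you sketch for it would fail. You propose to construct a specific $Tf\in V_{y_0}(B)$ that peaks at one of the two putative points $x_0,x_1$ but not the other, or to force the relevant peaking families to coincide via the peak-set argument of Lemma \ref{int}. But $T$ is only a $\rho$-preserving surjection: you have no control over which $f\in A$ is a preimage of a prescribed function in $V_{y_0}(B)$, hence no way to arrange that such an $f$ peaks at $x_0$ and not at $x_1$; and the peak-set argument only yields \emph{nonemptiness} of intersections with $\Ch(\A)$ --- it cannot separate two points. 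The separation fact from the preliminaries (that $V_{x_1}(A)\subseteq V_{x_2}(A)$ forces $x_1=x_2$) is likewise not what is needed here.

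The missing idea is that the argument should run in the \emph{forward} direction, using material you already have in hand. Suppose $x_0\neq x_1$ both lie in $I^1_{y_0}\cap\Ch(\A)$. Since $\delta(A)=\Ch(\A)$, the strong boundary point property is available \emph{in $A$ itself}: choose disjoint neighborhoods $U\ni x_0$ and $V\ni x_1$ and functions $f\in V_{x_0}(A)$, $g\in V_{x_1}(A)$ with $|f|<1$ on $X\setminus U$ and $|g|<1$ on $X\setminus V$. Then at every $x\in X$ at least one of $|f(x)|<1$, $|g(x)|<1$ holds, so Lemma \ref{lem1}(iii)--(iv) give $\|\varphi(f,g)\|_X<\varphi(1,1)$ and $\|\varphi(g,f)\|_X<\varphi(1,1)$, i.e.\ $\rho(Tf,Tg)=\rho(f,g)<\rho(1,1)$. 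On the other hand, by Lemma \ref{iff} you already know $y_0\in J^1_{x_0}\cap J^1_{x_1}$, so $|Tf(y_0)|=1=|Tg(y_0)|$ and hence
\[
\rho(1,1)=\rho\bigl(Tf(y_0),Tg(y_0)\bigr)\le \rho(Tf,Tg)<\rho(1,1),
\]
a contradiction. Thus no construction of $B$-side peaking functions or control of $T^{-1}$ is required: one evaluates at $y_0$ the $T$-images of peaking functions built at the two candidate points on the $A$-side, exactly the bridge between the two spaces that your outline was searching for.
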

\begin{proof}
Let $y_0\in \Ch(\B)$ and let $x_0$ be an arbitrary element in
$I^1_{y_0}\cap \Ch(\A)$. Then,  using Lemma \ref{iff}(i), we get $y_0\in
J^1_{x_0}$. Part (ii) of this lemma shows that  $x_0\in I^r_{y_0}$
for all $r>0$. That is  $I_{y_0}^1 \cap \Ch(\A)\subseteq I_{y_0}^r
\cap \Ch(\A)$ for all $r>0$. Conversely, if $r>0$ and $x_0$ is an
arbitrary point of $I_{y_0}^r \cap \Ch(\A)$, then by Lemma \ref{iff}(ii),
we have $y_0\in J^1_{x_0}$, and so, using part (i) of this lemma,
we get $x_0\in I^1_{y_0}\cap \Ch(\A)$.
Therefore, $I_{y_0}^1 \cap \Ch(\A)\supseteq  I_{y_0}^r \cap
\Ch(\A)$, that is
\[I_{y_0}^1 \cap \Ch(\A)= I_{y_0}^r \cap \Ch(\A)\;\;{\rm  for\,\, all \,r>0}.\]
Hence it suffices to show that $I_{y_0}^1 \cap \Ch(\A)$ is a
singleton. Assume on the contrary that $x_0,x_1$ are distinct
points in this intersection. Choose disjoint neighborhoods $U$ and
$V$ of $x_0$ and $x_1$, respectively. As $x_0,x_1\in \delta(A)$,
we can find functions $f\in V_{x_0}(A)$ and $g\in V_{x_1}(A)$ such
that $|f|<1$ on $X\backslash U$ and $|g|<1$ on $X\backslash V$.
This easily implies that $\|\varphi(f,g)\|_X < \varphi(1,1)$ and
$\|\varphi(g,f)\|_X<\varphi(1,1)$, that is $\rho(Tf,Tg)=\rho(f,g)<
\rho(1,1)$. On the other hand, by Lemma \ref{iff}, we have $y_0\in
J^1_{x_0}$ and $y_0\in J^1_{x_1}$ which yield
$|Tf(y_0)|=1=|Tg(y_0)|$. Thus
\[ \rho(1,1)=\rho(Tf(y_0),Tg(y_0))\le \rho(Tf,Tg) =\rho(f,g)<\rho(1,1),\]
a contradiction. Consequently, there exists $x_0\in \Ch(\A)$ such
that for any $r>0$, $I_{y_0}^r \cap \Ch(\A)=I_{y_0}^1 \cap
\Ch(\A)=\{x_0\}$.

In a similar manner, we can show that $J_{x_0}^r \cap \Ch(\B)=\{y_0\}$ for all $ r>0 $.
\end{proof}

Using the above lemmas, we can define a function $\Phi: \Ch(\B)
\longrightarrow \Ch(\A)$ which associates to each $y_0\in \Ch(\B)$,
the unique point $x_0\in I^r_{y_0}\cap\Ch(\A)$ for all $r>0$.

\begin{lem}\label{romaxequ}
If $\rho= \rho_{\max}$, then $|Tf(y_0)|=|f(\Phi(y_0))|$ for all $f\in A$ and $y_0\in
\Ch(\B)$.
\end{lem}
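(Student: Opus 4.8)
The plan is to establish the two inequalities $|Tf(y_0)|\le |f(\Phi(y_0))|$ and $|f(\Phi(y_0))|\le |Tf(y_0)|$ separately, exploiting the symmetric way in which $\Phi$ was constructed. Write $x_0=\Phi(y_0)$, so that by Lemma \ref{singleton} we have $x_0\in I^r_{y_0}$ and $y_0\in J^r_{x_0}$ for every $r>0$. The case $f=0$ is immediate from Lemma \ref{cap}(i), since then $Tf=0$; so I assume $f\neq 0$ and set $r=\|f\|_X=\|Tf\|_Y>0$, the last equality again by Lemma \ref{cap}(i).

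For the upper bound I fix $\epsilon>0$ and apply Lemma \ref{romax} to $A$, $x_0\in\Ch(\A)=\delta(A)$ and $f$, obtaining $h\in V_{x_0}(A)$ with $\rho_{\max}(f,rh)\le \rho_{\max}(|f(x_0)|+\epsilon,r)$. Since $rh\in rV_{x_0}(A)$ and $y_0\in J^r_{x_0}$, the definition of $J^r_{x_0}$ forces $|T(rh)(y_0)|=\|T(rh)\|_Y=r$. Now the pointwise bound $\rho_{\max}(F(y_0),G(y_0))\le \rho_{\max}(F,G)$ (immediate because $\varphi\ge 0$, so the supremum norms dominate the point values), combined with the hypothesis $\rho_{\max}(Tf,T(rh))=\rho_{\max}(f,rh)$ and Lemma \ref{lem1}(ii), gives
\[
\max\bigl(\varphi(|Tf(y_0)|,r),\,\varphi(r,|Tf(y_0)|)\bigr)\le \rho_{\max}(|f(x_0)|+\epsilon,r).
\]
If $|Tf(y_0)|>|f(x_0)|+\epsilon$, then the strict monotonicity in {\rm (inc)} (valid since $r\neq 0$) makes both arguments on the left strictly exceed the corresponding ones on the right, so the larger of the two does as well, contradicting the display. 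Hence $|Tf(y_0)|\le |f(x_0)|+\epsilon$, and letting $\epsilon\to 0$ yields $|Tf(y_0)|\le |f(x_0)|$.

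For the reverse inequality I run the identical argument on the $B$-side. Applying Lemma \ref{romax} to $B$, $y_0\in\delta(B)$ and $Tf$ produces $h'\in V_{y_0}(B)$ with $\rho_{\max}(Tf,rh')\le \rho_{\max}(|Tf(y_0)|+\epsilon,r)$. Here surjectivity of $T$ enters: choose $k\in A$ with $Tk=rh'\in rV_{y_0}(B)$; then $x_0\in I^r_{y_0}$ forces $|k(x_0)|=\|k\|_X=r$. Evaluating $\rho_{\max}(f,k)=\rho_{\max}(Tf,rh')$ at $x_0$ and arguing exactly as above, now comparing $|f(x_0)|$ with $|Tf(y_0)|+\epsilon$, gives $|f(x_0)|\le |Tf(y_0)|$. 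Combining the two bounds yields $|Tf(y_0)|=|f(\Phi(y_0))|$.

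The main obstacle is essentially bookkeeping rather than depth: arranging the approximating peaking functions so that they can be read off pointwise at the distinguished points. This is precisely what the defining properties $x_0\in I^r_{y_0}$ and $y_0\in J^r_{x_0}$ supply, since they guarantee that any function peaking at $x_0$ (respectively, any $T$-image of a function peaking at $y_0$) automatically attains its norm at $y_0$ (respectively, at $x_0$); this is what converts the functional estimate of Lemma \ref{romax} into a scalar inequality between $|Tf(y_0)|$ and $|f(x_0)|$. One should also check that $rh$, $rh'$ and $k$ indeed lie in the relevant sets when $A,B$ are only positive cones, but this is clear as positive cones are stable under multiplication by $r>0$.
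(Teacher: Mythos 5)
Your proposal is correct and takes essentially the same route as the paper: both inequalities rest on Lemma \ref{romax} combined with the memberships $\Phi(y_0)\in I^r_{y_0}$ and $y_0\in J^r_{\Phi(y_0)}$ from Lemma \ref{singleton}, with surjectivity of $T$ invoked for the $B$-side bound exactly as in the paper's ``similarly proven'' half (which you usefully spell out). The only cosmetic difference is that you establish $|Tf(y_0)|\le |f(\Phi(y_0))|+\epsilon$ for every $\epsilon>0$ and let $\epsilon\to 0$, whereas the paper fixes one sufficiently small $\epsilon$ and reaches a direct contradiction.
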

\begin{proof}
The assertion is trivial for $f=0$ since $T$ is norm preserving.
Assume that $f\in A$ is nonzero and  $y_0\in \Ch(\B)$ such that
$|f(\Phi(y_0))|<|Tf(y_0)|$. Then for sufficiently small
$\epsilon>0$, we have $|f(\Phi(y_0))|+\epsilon<|Tf(y_0)|$. Using
Lemma \ref{romax}, there exists $h\in \|f\|_X V_{\Phi(y_0)}(A)$
satisfying
 \[ \rho_{\rm max}(f,h) \le \rho_{\rm max}(|f(\Phi(y_0))|+\epsilon, \|f\|_X).\]
Hence
\[\rho_{\rm max}(Tf(y_0),\|f\|_X)\le \rho_{\rm max}(Tf,Th)=\rho_{\rm max}(f,h) <\rho_{\rm max}(Tf(y_0), \|f\|_X),\]
which is impossible. Thus $|f(\Phi(y_0))|\ge|Tf(y_0)|$. The other
inequality is similarly proven. Consequently,
$|Tf(y_0)|=|f(\Phi(y_0))|$.
\end{proof}

\begin{lem}\label{ro+equ}
If $\rho=\rho_+$ and $\varphi$ satisfies either

{\rm (a)} $\varphi(t,0)=0=\varphi(0,t)$ for all $t\ge >0$, or

{\rm (b)} $\varphi(t,a)\to \infty$ and $\varphi(a,t)\to \infty$ as
$t\to \infty$ for all $a>0$, \\
then $|Tf(y_0)|=|f(\Phi(y_0))|$ for all $f\in A$ and
$y_0\in \Ch(\B)$.
\end{lem}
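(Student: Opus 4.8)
The plan is to run the argument of Lemma \ref{romaxequ} almost verbatim, with Lemma \ref{ro+} playing the role that Lemma \ref{romax} played there and with $\rho_+$ in place of $\rho_{\max}$; the two hypotheses (a) and (b) enter only through the two parts of Lemma \ref{ro+}. Since $T$ is norm preserving (Lemma \ref{cap}(i)), the case $f=0$ is immediate, so I would fix a nonzero $f\in A$ and a point $y_0\in\Ch(\B)$, abbreviate $x_0=\Phi(y_0)$, and prove the two inequalities $|f(x_0)|\ge|Tf(y_0)|$ and $|Tf(y_0)|\ge|f(x_0)|$ separately, each by contradiction. To unify cases, I treat (a) as the special value $\lambda=1$ of the scalar produced in (b), so that both run in parallel.

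For the first inequality I would assume $|f(x_0)|<|Tf(y_0)|$ and fix a small $\epsilon>0$ with $|f(x_0)|+\epsilon<|Tf(y_0)|$. Lemma \ref{ro+} then supplies (in case (b)) a scalar $\lambda>0$ and $h\in V_{x_0}(A)$ with $\rho_+(f,\lambda h)<\rho_+(|f(x_0)|+\epsilon,\lambda)$, and in case (a) the same with $\lambda=1$. The decisive step is to show $|T(\lambda h)(y_0)|=\lambda$: since $\lambda h\in\lambda V_{x_0}(A)$, the definition of $J^\lambda_{x_0}$ gives $J^\lambda_{x_0}\subseteq M(T(\lambda h))$, and Lemma \ref{singleton}, which yields $J^r_{x_0}\cap\Ch(\B)=\{y_0\}$ for \emph{every} $r>0$, in particular $r=\lambda$, forces $y_0\in M(T(\lambda h))$, i.e. $|T(\lambda h)(y_0)|=\|T(\lambda h)\|_Y=\lambda$. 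Evaluating both coordinates at $y_0$ and using Lemma \ref{lem1}(ii) gives the lower bound $\rho_+(Tf(y_0),\lambda)\le\rho_+(Tf,T(\lambda h))$. Combining with the norm condition $\rho_+(Tf,T(\lambda h))=\rho_+(f,\lambda h)$ and the upper bound from Lemma \ref{ro+} yields $\rho_+(Tf(y_0),\lambda)<\rho_+(|f(x_0)|+\epsilon,\lambda)$, contradicting (inc): since $|Tf(y_0)|>|f(x_0)|+\epsilon$ and the companion argument $\lambda\neq0$, the coordinatewise strict monotonicity forces the opposite strict inequality.

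For the reverse inequality I would exploit the symmetry of the hypotheses: $T$ is surjective, $B\subseteq C_0(Y)$, and $y_0\in\delta(B)$, so Lemma \ref{ro+} applies with $(B,Y,y_0,Tf)$ in place of $(A,X,x_0,f)$. Assuming $|Tf(y_0)|<|f(x_0)|$ and choosing $\epsilon$ small, this produces $\mu>0$ and $k\in V_{y_0}(B)$ with $\rho_+(Tf,\mu k)<\rho_+(|Tf(y_0)|+\epsilon,\mu)$. Writing $\mu k=Th$ by surjectivity, I have $Th\in\mu V_{y_0}(B)$, so $I^\mu_{y_0}\subseteq M(h)$; since $x_0\in I^\mu_{y_0}\cap\Ch(\A)=\{x_0\}$ by Lemma \ref{singleton}, I get $x_0\in M(h)$ and hence $|h(x_0)|=\|h\|_X=\|Th\|_Y=\mu$. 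Then $\rho_+(f(x_0),\mu)\le\rho_+(f,h)=\rho_+(Tf,\mu k)<\rho_+(|Tf(y_0)|+\epsilon,\mu)$, and the same (inc)-contradiction closes this case. The two inequalities together give $|Tf(y_0)|=|f(x_0)|$.

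The main obstacle I anticipate is bookkeeping the radius $\lambda$ (resp. $\mu$) rather than any conceptual difficulty. Unlike the $\rho_{\max}$ argument, where it suffices that one of the two terms dominates, here both summands of $\rho_+$ must be controlled simultaneously, and the normalization forced by Lemma \ref{ro+}(ii) means the singleton property of $J^r_{x_0}$ (resp. $I^r_{y_0}$) must be invoked at the specific, $f$- and $\epsilon$-dependent radius $\lambda$ — which is precisely why Lemma \ref{singleton} was established for all $r>0$. Once that matching is made, the remaining work is only the strictness accounting from (inc), so the proof should go through cleanly.
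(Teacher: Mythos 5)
Your proposal is correct and follows essentially the same route as the paper: the same contradiction scheme via Lemma \ref{ro+}, the same key identification $|T(\lambda h)(y_0)|=\lambda$ from $y_0\in J^{\lambda}_{\Phi(y_0)}$ (the paper cites Lemma \ref{iff} for this membership, you cite Lemma \ref{singleton}, which carries the same content), and the same pointwise-evaluation lower bound combined with the strict monotonicity of Lemma \ref{lem1}(iii). Your only departures are cosmetic: you unify cases (a) and (b) by taking $\lambda=1$ in (a), and you spell out the reverse inequality via surjectivity and $I^{\mu}_{y_0}$, which the paper dispatches with ``in the same manner.''
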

\begin{proof}
The assertion is again trivial for $f=0$, so we assume that $f\in
A$ is nonzero. Let $y_0\in \Ch(\B)$ and
$|f(\Phi(y_0))|<|Tf(y_0)|$. Then there exists $\epsilon >0$ such
that $ |f(\Phi(y_0))|+\epsilon <|Tf(y_0)|$.

Assume that  (a) holds. Then, using Lemma \ref{ro+}(i), we can find $h\in
V_{\Phi(y_0)}(A)$ such that
$\rho_+(f,h)<\rho_+(|f(\Phi(y_0))|+\epsilon, 1)$. Hence
\[\rho_+(Tf,Th)=\rho_+(f,h)< \rho_+(|f(\Phi(y_0))|+\epsilon,
1)<\rho_+(Tf(y_0),1),\] which is impossible since, by Lemma
\ref{iff}, $y_0\in J^1_{\Phi(y_0)}$, that is $|Th(y_0)|=1$.

Now assume that (b) holds. Then, using Lemma \ref{ro+}(ii), there
exist $\lambda>0$ and $h\in V_{\Phi(y_0)}(A)$ such that
$\rho_+(f,\lambda \, h)< \rho_+(|f(\Phi(y_0))|+\epsilon,
\lambda)$. We note that $\lambda h \in \lambda V_{\Phi(y_0)}(A)$,
and since, by Lemma \ref{iff}(i), $y_0\in J_{\Phi(y_0)}^\lambda$, we have
$|T(\lambda \,h)(y_0)|=\lambda$. Hence
\begin{align*}
\rho_+(Tf(y_0),\lambda)&=\rho_+(Tf(y_0),T(\lambda \,h)(y_0))\le\rho_+(Tf,T(\lambda \,h))\\
&=\rho_+(f,\lambda\, h)< \rho_+(|f(\Phi(y_0))|+\epsilon,\lambda)<\rho_+(Tf(y_0),\lambda),
\end{align*}
which is impossible.

We showed that in both cases (a) and (b), $|f(\Phi(y_0))|\ge
|Tf(y_0)|$. In the same manner, the other inequality is proven.
\end{proof}

{\bf Proof of Theorem \ref{main1}.} By the above lemmas, we need
only to show that the function $\Phi: \Ch(\B)\longrightarrow
\Ch(\A)$ is a homeomorphism. We first note that $\Phi$ is
surjective. Indeed, for each $x_0\in \Ch(\A)$, it follows from
Lemma \ref{singleton} that  there exists a point $y_0\in \Ch(\B)$
such that $J^1_{x_0}\cap \Ch(\B)= \{y_0\}$. Now, Lemma \ref{iff}
implies that $I^1_{y_0} \cap \Ch(\A)= \{x_0\}$. Since, by the
definition of $\Phi$, $I^1_{y_0}\cap \Ch(\A)=\{\Phi(y_0)\}$, we
have $\Phi(y_0)=x_0$,  i.e. $\Phi$ is surjective.

Similar argument shows that $\Phi$ is injective.

To prove that $\Phi$ is continuous, let $y_0\in \Ch(\B)$ and let
$U$ be an open neighborhood of $\Phi(y_0)$ in $X$. Choose  $h\in
V_{\Phi(y_0)}(A)$ with $|h|< \frac{1}{2}$ on $X \backslash U$ and
consider the open subset $V=\{ y\in \Ch(\B): |Th(y)|>
\frac{1}{2}\}$ of $\Ch(\B)$. Then, since $|Th|=|h\circ \Phi|$ on
$\Ch(\B)$,  we have $\Phi(V)\subseteq U\cap \Ch(\A)$. Hence $\Phi$
is continuous. Similarly, $\Phi^{-1}$ is also continuous. $\Box$

\section{Multiplicative semigroups of continuous functions}
In this section, we assume that $\varphi: \BC \times \BC
\longrightarrow \BR^+$ is  a continuous map satisfying (inc). We
consider $\rho_+$ and $\rho_{{\rm max}}$ as in Definition
\ref{dfn} and study surjections between certain multiplicative
semigroups of continuous functions which  preserve either $\rho_+$
or $\rho_{{\rm max}}$.

The main result of this section is as follows.
\begin{theorem}\label{main2}
Let $\varphi: \BC \times \BC \longrightarrow \BR^+$ be a
continuous map satisfying {\rm (inc)}. Let $X$ and $Y$ be locally
compact Hausdorff spaces, and let $A$ and $B$ be either
multiplicative subsets of $C_0(X)$ and $C_0(Y)$, respectively, or
positive parts of such subsets, which are also closed under
multiplication by strictly positive scalars. Suppose that
$\delta(A)=\Ch(\A)$ and $\delta(B)=\Ch(\B)$ for some function
algebras $ \A $ and $ \B$ on $X$ and $Y$, respectively, with $|A|
\subseteq |\A|$ and $|B|\subseteq |\B|$. Let $\rho\in \{\rho_+,
\rho_{\rm max}\}$ and $T: A\longrightarrow B$ be a surjective map
satisfying
\[ \rho(Tf,Tg)= \rho(f,g) \qquad (f,g\in A).  \]
Then $T$  induces a bijection $\Phi: \Ch(\B)\longrightarrow
\Ch(\A)$ between the Choquet boundaries of $\A$ and $\B$.
Moreover,

{\rm (i)} If $\rho=\rho_{\rm max}$, then $ \Phi $ is a homeomorphism
and $|Tf(y)|=|f(\Phi(y))|$ for all $f\in A$ and $y\in \Ch(\B)$.

{\rm (ii)} If $\rho= \rho_+$ and either
\begin{itemize}
    \item[(a)] $\varphi(t,0)=0=\varphi(0,t)$ for all $t\ge 0$, or

    \item[(b)] $\varphi(t,a)\to \infty$ and $\varphi(a,t)\to \infty$ as
    $t\to \infty$ for all $a>0$,
\end{itemize}
holds, then $ \Phi $ is a homeomorphism and
$|Tf(y)|=|f(\Phi(y))|$ for all $f\in A$ and $y\in \Ch(\B)$.
\end{theorem}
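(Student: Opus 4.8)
The plan is to run the proof of Theorem~\ref{main1} essentially verbatim, reconstructing the analogues of Lemmas~\ref{cap}--\ref{ro+equ} and then the homeomorphism argument, and to isolate the one place where the present hypotheses genuinely differ. Condition {\rm (con)}, which is no longer assumed, was used in Section~4 at exactly one point, namely in the proof of Lemma~\ref{cap}(ii); everywhere else only {\rm (inc)}, the identity $\rho(Tf,Tg)=\rho(f,g)$, the peak-set structure of the function algebras $\A$ and $\B$, and closure of $A,B$ under multiplication by positive scalars enter, and all of these remain available. Hence the whole theorem reduces to re-establishing the analogue of Lemma~\ref{cap}(ii) with the averaging step replaced by a multiplicative one, after which every subsequent lemma and the final topological argument carry over without change.

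The statement I would prove in place of Lemma~\ref{cap}(ii) is: for $x_0\in\delta(A)$, $y_0\in\delta(B)$ and $r>0$, both $\cap_{f\in rV_{x_0}(A)}M(Tf)\neq\emptyset$ and $\cap_{Tf\in rV_{y_0}(B)}M(f)\neq\emptyset$. For the first I verify the finite intersection property of $\{M(Tf):f\in rV_{x_0}(A)\}$. Given $f_1,\dots,f_n\in rV_{x_0}(A)$, write $f_i=r\,w_i$ with $w_i\in V_{x_0}(A)$ and set $h=r^{1-n}\prod_{i=1}^n f_i=r\prod_{i=1}^n w_i$. Multiplicativity of $A$ and closure under positive scalars give $h\in A$, and since the $w_i$ all peak at the common point $x_0$ one has $\prod w_i\in V_{x_0}(A)$, so $h\in rV_{x_0}(A)\subseteq S_r(A)$ and $\|Th\|_Y=\|h\|_X=r$ by Lemma~\ref{cap}(i). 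Choosing $y_0\in M(Th)\cap\Ch(\B)$ (possible since $\Ch(\B)$ is a boundary for $\B$ and $|Th|\in|\B|$), I claim $y_0\in M(Tf_i)$ for every $i$.

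The computation that replaces {\rm (con)} is the pointwise bound $|h(x)|=r^{1-n}\prod_i|f_i(x)|\le r^{1-n}r^{\,n-1}|f_j(x)|=|f_j(x)|$, valid for every $x\in X$ and every index $j$. Suppose $|Tf_j(y_0)|<r$. Exactly as in Section~4 I choose $h'\in A$ with $Th'\in V_{y_0}(B)$ and $|Th'|<1$ off a neighbourhood of $y_0$ on which $|Tf_j|<r$; Lemma~\ref{lem1}(iii)--(iv) give $\rho(Tf_j,Th')<\rho(r,1)$, so the norm identity yields $\rho(f_j,h')<\rho(r,1)$, and hence for each $x$ either $|f_j(x)|<r$ or $|h'(x)|<1$. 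In the first case the displayed bound forces $|h(x)|\le|f_j(x)|<r$; in the second $|h(x)|\le r$ while $|h'(x)|<1$. In either case Lemma~\ref{lem1}(iii) gives $\varphi(h(x),h'(x))<\varphi(r,1)$ and $\varphi(h'(x),h(x))<\varphi(1,r)$, so by Lemma~\ref{lem1}(iv) $\rho(h,h')<\rho(r,1)$, whence $\rho(Th,Th')<\rho(r,1)$; this contradicts $|Th(y_0)|=r$ and $|Th'(y_0)|=1$, proving the claim. The second intersection is handled dually: form $q=r^{1-n}\prod_i Tf_i\in rV_{y_0}(B)$ in the multiplicative semigroup $B$, write $q=TF$ by surjectivity, pick $x_0\in M(F)\cap\Ch(\A)$, and repeat the argument comparing $F$ with a domain peaking function at $x_0$ and using $|q(y)|\le|Tf_j(y)|$ on $Y$.

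The only real obstacle is this substitution of product for average. In the additive setting {\rm (con)} was exactly what let the smallness of one summand survive the combination $\tfrac1n\sum f_i$, whereas a normalised product transmits the smallness of a single factor automatically, since $|h|\le|f_j|$ pointwise; thus only {\rm (inc)} is needed. One must keep the normalised product inside $rV_{x_0}(A)$, which is where the common peak point $x_0$ and closure under positive scalars are essential, and this is why the multiplicative statement is phrased for the sets $rV_{x_0}(A)$ rather than for arbitrary convex subsets of the sphere (for which the product need not have norm $r$). Granting this lemma, Lemmas~\ref{int}--\ref{ro+equ} apply verbatim to produce the bijection $\Phi:\Ch(\B)\to\Ch(\A)$ and the identity $|Tf(y)|=|f(\Phi(y))|$ in both cases $\rho=\rho_{\rm max}$ and $\rho=\rho_+$ (the latter under (a) or (b)), and the continuity argument at the end of Section~4 shows that $\Phi$ is a homeomorphism.
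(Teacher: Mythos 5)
Your proposal is correct and is essentially the paper's own proof: Lemma \ref{cap2}(ii) there makes exactly your replacement of the convex average by the normalized product $h=\prod_{i=1}^{n} f_i/r$, whose pointwise bound $|h|\le |f_j|$ transmits the smallness of a single factor automatically and thereby renders {\rm (con)} unnecessary, after which the Section 4 lemmas and the final homeomorphism argument are carried over just as you describe. The only cosmetic divergence is that the paper re-indexes $I^r_{y_0}$ and $J^r_{x_0}$ by the modulus-peaking sets $F_{y_0}(B)$ and $F_{x_0}(A)$ and obtains uniqueness via the set identity $T(F_{x_0}(A))=F_{y_0}(B)$, whereas you retain the $V$-sets of Section 4 and transfer Lemmas \ref{iff} and \ref{singleton} verbatim --- a harmless variation, since none of those proofs uses additive structure.
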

In what follows, we assume that $X,Y$ and $A,B$ are as in Theorem
\ref{main2}  and $T:A\longrightarrow B$ is a surjection satisfying
\[ \rho(Tf,Tg)= \rho(f,g) \qquad (f,g\in A),  \]
where $\rho \in \{\rho_+,\rho_{\rm max}\}$.

\begin{lem}\label{cap2}
{\rm (i)} For each $f\in A$, we have $\|Tf\|_Y=\|f\|_X$.

{\rm (ii)} Let $r>0$. Then for any multiplicative subset $D$ of
the sphere $S(A)=\{f\in A: \|f\|_X=1\}$,  we have $\cap_{f\in rD}
M(Tf)\neq \emptyset$. Similarly, for any multiplicative subset $D'$
of the sphere $S(B)=\{f\in B: \|g\|_Y=1\}$,  we have $\cap_{Tf\in
rD'} M(f)\neq \emptyset$.
\end{lem}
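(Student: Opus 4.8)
The plan is to follow the proof of Lemma~\ref{cap}, replacing the averaging step (which rested on condition (con), unavailable here) by a multiplicative one. Part (i) carries over verbatim from Lemma~\ref{cap}(i): by (inc) one has $\|\varphi(f,f)\|_X=\varphi(\|f\|_X,\|f\|_X)$ and similarly for $Tf$, so evaluating the identity $\rho(Tf,Tf)=\rho(f,f)$ at points of $M(f)$ and $M(Tf)$ and using the strict part of (inc) together with Lemma~\ref{lem1}(ii) forces $\|Tf\|_Y=\|f\|_X$. This step uses neither the semigroup law nor (con).

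For part (ii), as in Lemma~\ref{cap}(ii) it suffices to prove that the family $\{M(Tf):f\in rD\}$ of compact subsets of $Y$ has the finite intersection property, since compactness then yields a nonempty total intersection. Given $f_1,\dots,f_n\in rD$, I would write $f_i=rg_i$ with $g_i\in D$ and put $h=r\,g_1\cdots g_n$. Because $D$ is multiplicative and contained in $S(A)$, the product $g_1\cdots g_n$ again lies in $D$, so $\|h\|_X=r$ and $h\in rD$; here the hypotheses that $A$ is closed under products and under multiplication by positive scalars are exactly what keep $h$ inside $A$. By part (i), $\|Th\|_Y=r$, and since $\Ch(\B)$ is a boundary for $\B$ with $|B|\subseteq|\B|$, choose $y_0\in\Ch(\B)$ with $|Th(y_0)|=r$. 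The claim is that $y_0\in M(Tf_i)$ for every $i$.

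Suppose instead $|Tf_j(y_0)|<r$ for some $j$. Taking a neighborhood $V$ of $y_0$ with $|Tf_j|<r$ on $V$ and, using $y_0\in\delta(B)$ together with surjectivity of $T$, a function $h'\in A$ with $Th'\in V_{y_0}(B)$ and $|Th'|<1$ off $V$, one gets at every $y\in Y$ either $|Tf_j(y)|<r$ or $|Th'(y)|<1$; Lemma~\ref{lem1}(iii),(iv) then gives $\rho(Tf_j,Th')<\rho(r,1)$, hence $\rho(f_j,h')<\rho(r,1)$ by the norm condition. The crucial new ingredient, substituting for (con), is the pointwise domination $|h(x)|=r\prod_i|g_i(x)|\le r|g_j(x)|=|f_j(x)|$, valid because each $|g_i|\le\|g_i\|_X=1$. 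Since $\rho(f_j,h')<\rho(r,1)$ forces, at each $x$, either $|f_j(x)|<r$ or $|h'(x)|<1$, the domination shows that at every $x$ either $|h(x)|<r$ or $|h'(x)|<1$, whence $\rho(h,h')<\rho(r,1)$ by Lemma~\ref{lem1} once more. But $|Th(y_0)|=r$ and $|Th'(y_0)|=1$ give
\[\rho(r,1)=\rho(Th(y_0),Th'(y_0))\le\rho(Th,Th')=\rho(h,h')<\rho(r,1),\]
a contradiction. The second assertion is symmetric: given $f_1,\dots,f_n\in A$ with $Tf_i=rg_i'\in rD'$, set $g'=g_1'\cdots g_n'\in D'$, use surjectivity to pick $f_0\in A$ with $Tf_0=rg'$, choose $x_0\in\Ch(\A)$ with $|f_0(x_0)|=r$, and run the same argument with the roles of $X$ and $Y$ interchanged, now exploiting the domination $|Tf_0(y)|\le|Tf_j(y)|$.

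I expect the main obstacle to be isolating the correct replacement for (con). In the additive case convexity permitted averaging the $f_i$ and controlling $\varphi$ through (con), whereas here one must instead exploit the multiplicative structure so that the product $h$ is, after scaling, dominated in modulus at every point by each individual factor $f_j$; this domination is precisely what drives the contradiction. Once it is identified, everything else is a routine transcription of the proof of Lemma~\ref{cap}.
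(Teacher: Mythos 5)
Your proposal is correct and follows essentially the same route as the paper: reduce to the finite intersection property, form the product $h$ of the $f_i$ (the paper uses $h=\prod_{i=1}^n f_i/r\in D$ of norm $1$ and contradicts $\rho(1,1)$, while you keep $h=r\,g_1\cdots g_n\in rD$ of norm $r$ and contradict $\rho(r,1)$ --- a trivial rescaling), and derive the contradiction via the peaking function $Th'\in V_{y_0}(B)$ together with Lemma~\ref{lem1}(iii),(iv). The pointwise domination $|h(x)|\le |f_j(x)|$ that you isolate as the substitute for (con) is exactly the mechanism the paper uses implicitly in its displayed inequality $\varphi(h(x),h'(x))=\varphi\bigl(\tfrac{1}{r^n}f_j(x)\prod_{i\neq j}f_i(x),h'(x)\bigr)<\varphi(1,1)$.
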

\begin{proof}
(i) The proof is similar to Lemma \ref{cap}(i).

 (ii) Let $D$ be a multiplicative subset of $S(A)$ and let $f_1,..., f_n\in rD$. Since
$D$ is multiplicative,
 we have $h= \Pi_{i=1}^n \frac{f_i}{r}\in D$.
  By (i), $\|Th\|_Y=\|h\|_X=1$.  Let  $y_0\in \Ch(\B)$ such
that  $|Th(y_0)|=1=\|h\|_X$.  We claim that $y_0\in M(Tf_i)$ for
$i=1,...,n$. Assume on the contrary that $|Tf_j(y_0)|<r$ for some
$1\le j\le n$. Then there exists a neighborhood $V$ of $y_0$ such
that $|Tf_j|<r$ on $V$ and there exists $h'\in A$ such that
$Th'(y_0)=1=\|Th'\|_Y$ and $|Th'|< 1$ on $Y \backslash V$. We note
that  for all $y\in V$, we have $|Tf_j(y)|<r $ and $|Th'(y)|\le 1$,
and for all $ y\in Y \backslash V$, we have $|Tf_j(y)|\le r$ and
$|Th'(y)|<1$. Hence, by  Lemma \ref{lem1}(iii), we get
$\varphi(Tf_j(y), Th'(y))< \varphi(r,1)$  and
$\varphi(Th'(y),Tf_j(y))< \varphi(1,r)$  for all $y\in Y$. Thus,
using Lemma \ref{lem1}(iv), we have $\|\varphi(Tf_j, Th')\|_Y<
\varphi(r,1)$ and $\|\varphi(Th',Tf_j)\|_Y< \varphi(1,r)$, that is
$\rho(Tf_j,Th')< \rho(r,1)$. The hypotheses imply that
\[  \rho(f_j,h') < \rho( r,1),\]
and consequently  for each $ x\in X $, at least one of the
inequalities $|f_j(x)|<r $ and $|h'(x)|<1$ holds. Now it follows
that for each $x\in X$,
\begin{eqnarray*}
\varphi(h(x),h'(x)) =\varphi(\frac{1}{r^n} f_j(x) \Pi_{i\neq j}
f_i(x), h'(x))<\varphi(1,1)
\end{eqnarray*}
and similarly $ \varphi(h'(x),h(x))<\varphi(1,1) $. Hence
\begin{align*}
    \rho(Th,Th')=\rho(h,h')<\rho(1,1),
\end{align*}
and consequently
\[ \rho(1,1)=\rho(Th(y_0),Th'(y_0))\le \rho(Th',Th)<\rho(1,1) \]
which is a contradiction. This argument shows that $y_0\in \cap_{i=1}^n M(Tf_i)$, as desired.

The other part has a similar proof.
\end{proof}

In this section, for $y_0\in \Ch(\B)$ and $r>0$, we set $I^r_{y_0}= \cap_{Tf\in
rF_{y_0}(B)}M(f)$. Similarly, for $x_0\in \Ch(\A)$ and $r>0$, we
set $J_{x_0}^r=\cap_{f\in rF_{x_0}(A)} M(Tf)$.

\begin{lem}\label{int2}
Let $y_0\in \Ch(\B)$ and $x_0\in \Ch(\A)$. Then  for all $r>0$, we
have $I^r_{y_0} \cap \Ch(\A)\neq \emptyset$ and $J^r_{x_0}\cap
\Ch(\B) \neq \emptyset$.
\end{lem}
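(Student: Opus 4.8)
The plan is to prove Lemma \ref{int2} by adapting the argument of Lemma \ref{int} to the multiplicative setting, now using the sets $F_{y_0}(B)$ and $F_{x_0}(A)$ in place of $V_{y_0}(B)$ and $V_{x_0}(A)$. I prove only the first assertion $I^r_{y_0}\cap\Ch(\A)\neq\emptyset$, since the second is entirely symmetric.

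The first step is to establish that $I^r_{y_0}$ itself is nonempty via Lemma \ref{cap2}(ii). For this I must verify that the set $rF_{y_0}(B)$ (more precisely, the relevant preimage under $T$) has the multiplicative structure required to invoke that lemma. Here the hypotheses of Theorem \ref{main2} are essential: since $A$ and $B$ are multiplicative sets closed under multiplication by strictly positive scalars, and since $y_0\in\delta(B)=\Ch(\B)$, the family $F_{y_0}(B)=\{g\in B:|g(y_0)|=1=\|g\|_Y\}$ is a multiplicative subset of the unit sphere $S(B)$. Indeed, if $g_1,g_2\in F_{y_0}(B)$ then $\|g_1g_2\|_Y\le\|g_1\|_Y\|g_2\|_Y=1$ while $|g_1(y_0)g_2(y_0)|=1$, which forces $\|g_1g_2\|_Y=1$ and hence $g_1g_2\in F_{y_0}(B)$. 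Applying Lemma \ref{cap2}(ii) with $D'=F_{y_0}(B)$ then yields $\cap_{Tf\in rF_{y_0}(B)}M(f)=I^r_{y_0}\neq\emptyset$.

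The second step transfers this nonempty intersection into the Choquet boundary, and here the argument is identical in spirit to Lemma \ref{int}. I would fix a point $x_0^r\in I^r_{y_0}$, so that $x_0^r\in M(f)$ for every $f\in A$ with $Tf\in rF_{y_0}(B)$. Using the hypothesis $|A|\subseteq|\A|$, for each such $f$ there is $g\in\A$ with $|f|=|g|$, whence $M(f)=M(g)$; the maximum modulus set $M(g)$ contains a peak set of the function algebra $\A$ that itself contains $x_0^r$. Since any nonempty intersection of peak sets of a function algebra meets its Choquet boundary $\Ch(\A)$, the point $x_0^r$ lies in (the closure of) such an intersection meeting $\Ch(\A)$, giving $I^r_{y_0}\cap\Ch(\A)\neq\emptyset$.

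I expect no serious obstacle, as the structure mirrors Lemma \ref{int} closely; the only point demanding care is the verification that $F_{y_0}(B)$ is genuinely multiplicative and lands in the unit sphere, so that Lemma \ref{cap2}(ii) applies cleanly with $r$ scaling the sphere appropriately. This is where the distinction between the additive setting (convex subsets of $S_r$) and the multiplicative setting (multiplicative subsets of $S=S_1$ scaled by $r$) must be handled correctly, but it follows directly from the submultiplicativity of the supremum norm together with the defining equality $|g(y_0)|=1$ at the fixed boundary point.
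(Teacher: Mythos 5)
Your proposal is correct and follows essentially the same route as the paper's own proof: nonemptiness of $I^r_{y_0}$ via Lemma \ref{cap2}(ii) applied to the multiplicative set $F_{y_0}(B)\subseteq S(B)$, then passage to $\Ch(\A)$ through $|A|\subseteq|\A|$, $M(f)=M(g)$, and the fact that nonempty intersections of peak sets of the function algebra $\A$ meet $\Ch(\A)$, with the second assertion handled symmetrically. Your explicit check that $F_{y_0}(B)$ is multiplicative (via submultiplicativity of the sup norm and $|g_1(y_0)g_2(y_0)|=1$) is a detail the paper leaves implicit, and it is correct.
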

\begin{proof}
 Let $y_0\in \Ch(\B)$ and let $r>0$. Since the set $F_{y_0}(B)$
is a multiplicative  subset of unit sphere $S(B)=\{g\in B:
\|g\|_Y=1\}$ of $B$, it follows from Lemma \ref{cap2}(ii) that
$I^r_{y_0}\neq \emptyset$. Choosing $z_0\in I^r_{y_0} $, we have
$z_0\in M(f)$ for all  $f\in A$ with $ Tf\in rF_{y_0}(B)$. By
assumption, for each $f\in A$ there exists $g\in \A$ with
$|f|=|g|$, which yields  $M(f)=M(g)$. Since any nonempty
intersection of peak sets of $\A$ intersects $\Ch(A)$,  as in
Lemma \ref{int}, we conclude that $I^r_{y_0} \cap
\Ch(\A)\neq\emptyset$.

Similarly, for each $x_0\in \Ch(\A)$, we have $J^r_{x_0}\cap \Ch(\B)
\neq \emptyset$.
\end{proof}

\begin{lem}
  For each $y_0\in \Ch(\B)$, there exists
  $x_0\in \Ch(\A)$ such that $I_{y_0}^1 \cap \Ch(\A)=\{x_0\}$ and $T(F_{x_0}(A))=F_{y_0}(B)$.
\end{lem}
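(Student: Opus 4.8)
The plan is to imitate the additive development of Lemmas~\ref{iff} and~\ref{singleton}, replacing the value sets $V_{y_0}(B),V_{x_0}(A)$ by the modulus sets $F_{y_0}(B),F_{x_0}(A)$ and invoking Lemma~\ref{cap2} and Lemma~\ref{int2} in place of their additive counterparts. Fix $y_0\in\Ch(\B)$; by Lemma~\ref{int2} the set $I^1_{y_0}\cap\Ch(\A)$ is nonempty, so I pick any $x_0$ in it and claim it is the point sought. The central step is the duality ``$x_0\in I^1_{y_0}\ \Rightarrow\ y_0\in J^1_{x_0}$''. Suppose it fails: then some $f\in F_{x_0}(A)$ satisfies $|Tf(y_0)|<\|Tf\|_Y$, where $\|Tf\|_Y=\|f\|_X=1$ by Lemma~\ref{cap2}(i). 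Choosing a neighborhood $U$ of $y_0$ with $|Tf|<1$ on $U$ and, using $y_0\in\delta(B)$ together with the surjectivity of $T$, a function $h\in A$ with $Th\in V_{y_0}(B)$ and $|Th|<1$ on $Y\setminus U$, one checks that for every $y\in Y$ one of $|Tf(y)|,|Th(y)|$ is $<1$ while the other is $\le1$. Lemma~\ref{lem1}(iii),(iv) then yield $\rho(Tf,Th)<\rho(1,1)$, hence $\rho(f,h)<\rho(1,1)$ by the norm condition. But $Th\in V_{y_0}(B)\subseteq F_{y_0}(B)$ and $x_0\in I^1_{y_0}$ force $x_0\in M(h)$, so $|h(x_0)|=1=|f(x_0)|$, and Lemma~\ref{lem1}(ii) gives the contradiction $\rho(1,1)=\rho(f(x_0),h(x_0))\le\rho(f,h)<\rho(1,1)$.

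With this in hand I would first show $I^1_{y_0}\cap\Ch(\A)=\{x_0\}$, copying the last paragraph of Lemma~\ref{singleton}: two distinct points $x_0,x_1$ of this set admit, since $x_0,x_1\in\delta(A)$, functions $f\in V_{x_0}(A)$ and $g\in V_{x_1}(A)$ with $|f|<1$ off a neighborhood $U$ of $x_0$ and $|g|<1$ off a disjoint neighborhood $V$ of $x_1$, whence $\rho(Tf,Tg)=\rho(f,g)<\rho(1,1)$ by Lemma~\ref{lem1}; but the duality gives $y_0\in J^1_{x_0}\cap J^1_{x_1}$, so $|Tf(y_0)|=|Tg(y_0)|=1$ and $\rho(1,1)\le\rho(Tf,Tg)<\rho(1,1)$, a contradiction. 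The correspondence $T(F_{x_0}(A))=F_{y_0}(B)$ is then immediate from the two memberships $x_0\in I^1_{y_0}$ and $y_0\in J^1_{x_0}$: if $f\in F_{x_0}(A)$ then $\|Tf\|_Y=\|f\|_X=1$ and $y_0\in J^1_{x_0}=\bigcap_{f'\in F_{x_0}(A)}M(Tf')$ gives $|Tf(y_0)|=1$, so $Tf\in F_{y_0}(B)$; conversely, given $g\in F_{y_0}(B)$, surjectivity yields $g=Tf$ with $\|f\|_X=\|g\|_Y=1$, and $x_0\in I^1_{y_0}=\bigcap_{Tf'\in F_{y_0}(B)}M(f')$ gives $|f(x_0)|=1$, so $f\in F_{x_0}(A)$ and $g\in T(F_{x_0}(A))$.

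The hard part is the duality step, and in it the two things to verify carefully are that the functions produced at the strong boundary points actually land in the relevant modulus sets (so that $I^1_{y_0}$ and $J^1_{x_0}$ may legitimately be applied to $h$, $f$, and $g$), and that the pointwise strict inequalities of Lemma~\ref{lem1}(iii) hold on all of $Y$, which rests on the clean split of $Y$ into $U$ and its complement and on the fact, already exploited in Lemma~\ref{int2}, that $F_{y_0}(B)$ is a multiplicative subset of the unit sphere of $B$. I note that only the scale $1$ is needed for the present statement; the full analogue of Lemma~\ref{iff} for arbitrary $r>0$, required by the later lemmas, follows by the same argument after scaling the peaking function into $\lambda V_{x_0}(A)$, which is permitted because $A$ and $B$ are assumed closed under multiplication by strictly positive scalars.
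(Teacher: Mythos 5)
Your proof is correct, but it takes a genuinely different route from the paper's. You re-run the additive machinery of Section 4 in the multiplicative setting: you prove the duality $x_0\in I^1_{y_0}\Rightarrow y_0\in J^1_{x_0}$ by the peaking-function and strict-inequality argument of Lemma \ref{iff} (via Lemma \ref{lem1}(ii)--(iv)), get uniqueness of $x_0$ by the disjoint-neighborhood contradiction of Lemma \ref{singleton}, and then read off $T(F_{x_0}(A))=F_{y_0}(B)$ from the two memberships; all of these steps check out, including the points you flag (that $Th\in V_{y_0}(B)\subseteq F_{y_0}(B)$ so $I^1_{y_0}$ applies to $h$, and the clean $U$ versus $Y\setminus U$ split feeding Lemma \ref{lem1}(iii),(iv)). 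The paper instead gives a purely set-theoretic argument that never touches the $\rho$-inequalities for this lemma: from $x_0\in I^1_{y_0}$ it gets $T^{-1}(F_{y_0}(B))\subseteq F_{x_0}(A)$ directly, using only norm preservation (Lemma \ref{cap2}(i)); picking $z_0\in J^1_{x_0}\cap\Ch(\B)$, nonempty by Lemma \ref{int2}, gives $T(F_{x_0}(A))\subseteq F_{z_0}(B)$; surjectivity yields the sandwich $F_{y_0}(B)=T(T^{-1}(F_{y_0}(B)))\subseteq T(F_{x_0}(A))\subseteq F_{z_0}(B)$, and the elementary fact recorded in the Preliminaries --- for strong boundary points, $F_{y_0}(B)\subseteq F_{z_0}(B)$ forces $y_0=z_0$ --- closes the correspondence. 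Uniqueness then comes for free: the identity $F_{x_0}(A)=T^{-1}(F_{y_0}(B))$ holds for every $x_0\in I^1_{y_0}\cap\Ch(\A)$, so any two such points have equal $F$-sets and coincide by the same separation fact. What each buys: the paper's proof is shorter and isolates exactly what this lemma needs (Lemmas \ref{cap2}(i) and \ref{int2} plus the $F$-set separation property), deferring the iff/singleton adaptations to Lemma \ref{singleton2}; your proof duplicates that deferred work, but in compensation it makes fully explicit the ``minor modification'' the paper only gestures at, effectively establishing the $r=1$ case of Lemma \ref{singleton2} en route, and your closing remark that the general $r>0$ case follows by scaling (legitimate since $A$ and $B$ are closed under multiplication by strictly positive scalars) is exactly right.
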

\begin{proof}
    Let $y_0\in \Ch(\B)$ and let $x_0$ be an arbitrary point in $I_{y_0}^1\cap \Ch(\A)$.
    Then clearly,  $T^{-1}(F_{y_0}(B))\subseteq F_{x_0}(A)$. Since, by Lemma \ref{int2},  $J_{x_0}^1\cap \Ch(\B) \neq  \emptyset$,
    there exists a point $z_0\in \Ch(\B)$
    such that $T(F_{x_0}(A))\subseteq F_{z_0}(B)$. Thus
     \[ F_{y_0}(B)=T(T^{-1}(F_{y_0}(B)))\subseteq T(F_{x_0}(A))\subseteq
      F_{z_0}(B), \]  and hence $ y_0=z_0 $ since $y_0,z_0\in \delta(B)$. Therefore,
      $T(F_{x_0}(A))=F_{y_0}(B)$. In particular, $F_{x_0}(A)\subseteq
      T^{-1}(F_{y_0}(B))$. As it was noted before, the reverse inclusion also holds, and
      consequently we get $F_{x_0}(A) =T^{-1}(F_{y_0}(B))$.
      Since this equality holds for all $ x_0\in I_{y_0}^1\cap \Ch(\A)$,
      it follows that the intersection $I_{y_0}^1 \cap \Ch(\A)$ is the
      singleton $\{x_0\}$. Note that for this unique point $ x_0 $, we have $T(F_{x_0}(A))=F_{y_0}(B)$.
\end{proof}

Using the above lemma, we can define a bijective
    map $\Phi: \Ch(\B)\longrightarrow \Ch(\A)$ which associates to
    each $y_0\in \Ch(\B)$, the unique point $x_0\in I_{y_0}^1\cap\Ch(\A)$.

A minor modification of the proofs of Lemmas \ref{iff} and
\ref{singleton} yields the next lemma.

\begin{lem} \label{singleton2}
Let $y_0\in \Ch(B)$. Then for any $r>0$, we have
$I^r_{y_0}\cap\Ch(\A)=\{\Phi(y_0)\}$ and $J^r_{\Phi(y_0)}\cap\Ch(\B)=\{y_0\}$.
\end{lem}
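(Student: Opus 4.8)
The plan is to mirror the structure used in the additive case (Lemmas \ref{iff} and \ref{singleton}), adapting the arguments to the multiplicative setting and the fact that $\Phi$ has already been defined via the sets $I^1_{y_0}\cap\Ch(\A)=\{\Phi(y_0)\}$ and the relation $T(F_{\Phi(y_0)}(A))=F_{y_0}(B)$. The heart of the matter is to establish the two analogues of Lemma \ref{iff}, namely that for $x_0\in\Ch(\A)$, $y_0\in\Ch(\B)$ and any $s>0$, one has $x_0\in I^1_{y_0}$ iff $y_0\in J^s_{x_0}$, and $y_0\in J^1_{x_0}$ iff $x_0\in I^s_{y_0}$. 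Once these equivalences are available, the stability of the intersections across all radii $r>0$ follows exactly as in Lemma \ref{singleton}.

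First I would prove the forward direction of the $I$--$J$ equivalence by contradiction, following Lemma \ref{iff}(i) verbatim but with $V_{x_0}$ replaced by $F_{x_0}$. Assume $x_0\in I^1_{y_0}$ but $y_0\notin J^s_{x_0}$; then there is $f\in sF_{x_0}(A)$ with $|Tf(y_0)|<s$. Using continuity of $Tf$, choose a neighborhood $U$ of $y_0$ on which $|Tf|<s$, and invoke $y_0\in\delta(B)$ together with surjectivity of $T$ to produce $h\in A$ with $Th\in F_{y_0}(B)$ and $|Th|<1$ off $U$. Then for every $y\in Y$ at least one of $|Tf(y)|<s$, $|Th(y)|<1$ holds, so Lemma \ref{lem1}(iii)--(iv) gives $\rho(Tf,Th)<\rho(s,1)$, whence $\rho(f,h)<\rho(s,1)$. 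But since $Th\in F_{y_0}(B)$ and $x_0\in I^1_{y_0}$ we get $|h(x_0)|=1=\|h\|_X$, and since $f\in sF_{x_0}(A)$ we get $|f(x_0)|=s$; evaluating at $x_0$ yields $\rho(s,1)=\rho(f(x_0),h(x_0))\le\rho(f,h)<\rho(s,1)$, a contradiction. The remaining implications are obtained by the same symmetric reasoning, exchanging the roles of $A$ and $B$ and of $I$ and $J$.

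With the $I$--$J$ equivalences in hand, I would argue that $I^1_{y_0}\cap\Ch(\A)=I^r_{y_0}\cap\Ch(\A)$ for all $r>0$ by the double-inclusion chain of Lemma \ref{singleton}: any point of $I^1_{y_0}\cap\Ch(\A)$ lands in $J^1$ and then back in $I^r$, and conversely any point of $I^r_{y_0}\cap\Ch(\A)$ passes through $J^1$ back into $I^1$. Since the preceding lemma already identifies $I^1_{y_0}\cap\Ch(\A)$ as the singleton $\{\Phi(y_0)\}$, this gives $I^r_{y_0}\cap\Ch(\A)=\{\Phi(y_0)\}$ for every $r>0$. The companion statement $J^r_{\Phi(y_0)}\cap\Ch(\B)=\{y_0\}$ follows symmetrically, using the $J$--$I$ equivalence and the fact that $\Phi$ is a bijection so that $y_0$ is the unique point with $\Phi(y_0)=x_0$.

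The one point that requires genuine care, rather than routine transcription, is the subtle shift from $V_{x_0}$ to $F_{x_0}$: in the multiplicative case the distinguished sets only control the modulus at $x_0$, so I must verify that the evaluations $|f(x_0)|=s$ and $|h(x_0)|=1$ (rather than exact values $f(x_0)=s$, $h(x_0)=1$) still suffice to run the final contradiction. This is fine because $\rho$ depends on its arguments only through their moduli by Lemma \ref{lem1}(ii), so $\rho(f(x_0),h(x_0))=\rho(|f(x_0)|,|h(x_0)|)=\rho(s,1)$. I expect this modulus-insensitivity to be the decisive observation that lets the additive-case proof go through unchanged; the rest is bookkeeping with the same $\rho$-monotonicity lemmas.
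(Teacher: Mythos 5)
Your proposal is correct and follows essentially the same route as the paper, which proves Lemma \ref{singleton2} precisely by the ``minor modification'' of Lemmas \ref{iff} and \ref{singleton} that you carry out: replace $V_{x_0}$ by $F_{x_0}$ (using closure of $A$ under positive scalars to form $sf'\in sF_{x_0}(A)$ where needed), run the same contradiction via Lemma \ref{lem1}(iii)--(iv), and chain the $I$--$J$ equivalences for stability across all radii $r>0$. Your key observation --- that by Lemma \ref{lem1}(ii) the evaluation step only needs $|f(x_0)|=s$ and $|h(x_0)|=1$, since $\rho(f(x_0),h(x_0))=\rho(|f(x_0)|,|h(x_0)|)=\rho(s,1)$ --- is exactly what makes the transcription from the additive case legitimate, so there is no gap.
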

The next two lemmas also have similar proofs to Lemmas
\ref{romaxequ} and \ref{ro+equ}, and hence we ignore their proofs.
\begin{lem} If $\rho= \rho_{\max}$, then $|Tf(y_0)|=|f(\Phi(y_0))|$ for all $f\in A$ and $y_0\in
\Ch(\B)$.
\end{lem}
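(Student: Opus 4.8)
The plan is to reproduce the argument of Lemma~\ref{romaxequ} almost verbatim, replacing the $V$-sets and Lemma~\ref{singleton} by the $F$-sets and Lemma~\ref{singleton2} appropriate to the multiplicative setting. As before, the statement is trivial when $f=0$, since $T$ is norm preserving by Lemma~\ref{cap2}(i). So I fix a nonzero $f\in A$ and a point $y_0\in\Ch(\B)$, write $x_0=\Phi(y_0)\in\Ch(\A)=\delta(A)$, and establish the two inequalities $|f(x_0)|\ge|Tf(y_0)|$ and $|Tf(y_0)|\ge|f(x_0)|$ separately, each by contradiction.

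For the first inequality I assume $|f(x_0)|<|Tf(y_0)|$ and choose $\epsilon>0$ with $|f(x_0)|+\epsilon<|Tf(y_0)|$. Applying Lemma~\ref{romax} at the strong boundary point $x_0$ produces $h'\in V_{x_0}(A)$ with $\rho_{\max}(f,\|f\|_X h')\le\rho_{\max}(|f(x_0)|+\epsilon,\|f\|_X)$. Since $A$ is closed under multiplication by strictly positive scalars and $\|f\|_X>0$, the function $h=\|f\|_X h'$ lies in $A$, and because $V_{x_0}(A)\subseteq F_{x_0}(A)$ we have $h\in\|f\|_X F_{x_0}(A)$. The crucial step is then to invoke Lemma~\ref{singleton2}: it gives $y_0\in J^{\|f\|_X}_{x_0}=\cap_{g\in\|f\|_X F_{x_0}(A)}M(Tg)$, whence $y_0\in M(Th)$ and $|Th(y_0)|=\|Th\|_Y=\|h\|_X=\|f\|_X$. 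Feeding this into the norm condition and using the strict monotonicity of $\rho_{\max}$ (Lemma~\ref{lem1}(iii)) yields
\[\rho_{\max}(Tf(y_0),\|f\|_X)\le\rho_{\max}(Tf,Th)=\rho_{\max}(f,h)\le\rho_{\max}(|f(x_0)|+\epsilon,\|f\|_X)<\rho_{\max}(Tf(y_0),\|f\|_X),\]
which is absurd; hence $|f(x_0)|\ge|Tf(y_0)|$.

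The reverse inequality is obtained by the symmetric argument carried out on the $Y$-side: assuming $|Tf(y_0)|<|f(x_0)|$, I apply Lemma~\ref{romax} at $y_0\in\delta(B)$ to $Tf$, scale the resulting function (noting $\|Tf\|_Y=\|f\|_X$ by Lemma~\ref{cap2}(i)), pull it back through the surjection $T$, and use $I^{\|f\|_X}_{y_0}\cap\Ch(\A)=\{x_0\}$ from Lemma~\ref{singleton2} to force the pulled-back function to attain modulus $\|f\|_X$ at $x_0$; the same monotonicity contradiction closes this case. Combining the two inequalities gives $|Tf(y_0)|=|f(x_0)|=|f(\Phi(y_0))|$. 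I expect the only point requiring genuine care to be the bookkeeping in the crucial step, namely verifying that the $V$-function delivered by Lemma~\ref{romax}, once rescaled by $\|f\|_X$, is a legitimate member of $\|f\|_X F_{x_0}(A)$ so that Lemma~\ref{singleton2} applies and pins down $|Th(y_0)|=\|f\|_X$. The multiplicative hypotheses enter only through this closure under positive scalars, and the additive-specific condition (con) plays no role here.
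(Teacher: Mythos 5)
Your proof is correct and is essentially the paper's own argument: the paper dismisses this lemma as having a proof ``similar to Lemma \ref{romaxequ}'', and you carry out exactly the intended modifications --- applying Lemma \ref{romax} at $\Phi(y_0)\in\delta(A)$, noting $V_{\Phi(y_0)}(A)\subseteq F_{\Phi(y_0)}(A)$ and closure under positive scalars so that $\|f\|_X h'\in \|f\|_X F_{\Phi(y_0)}(A)$, and invoking Lemma \ref{singleton2} in place of Lemma \ref{singleton} to pin down $|Th(y_0)|=\|f\|_X$, with the symmetric argument on the $Y$-side for the reverse inequality. Your closing remark that condition (con) plays no role here is also accurate, since (con) enters only through Lemma \ref{cap} in the additive setting.
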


\begin{lem}
If $\rho=\rho_+$ and $\varphi$ satisfies either

{\rm (a)} $\varphi(t,0)=0=\varphi(0,t)$ for all $t \ge 0$, or

{\rm (b)} $\varphi(t,a)\to \infty$ and $\varphi(a,t)\to \infty$ as
$t\to \infty$ for all $a>0$, \\
then $|Tf(y_0)|=|f(\Phi(y_0))|$ for all $f\in A$ and
$y_0\in \Ch(\B)$.
\end{lem}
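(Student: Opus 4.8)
The plan is to run the same contradiction argument as in the proof of Lemma \ref{ro+equ}, adapting it to the multiplicative indexing of Section~5. The two auxiliary facts I need are already available: Lemma \ref{ro+} is stated for an \emph{arbitrary} subset of $C_0$ and invokes only the strong boundary point property, so it transfers verbatim to the present setting (applied either to $A$ at $\Phi(y_0)\in\delta(A)$, or, for the reverse inequality, to $B$ at $y_0\in\delta(B)$); and Lemma \ref{singleton2} supplies, for every $r>0$, the memberships $y_0\in J^r_{\Phi(y_0)}$ and $\Phi(y_0)\in I^r_{y_0}$. The one bookkeeping point is that Lemma \ref{ro+} produces a peaking function $h\in V_{x_0}(A)$, whereas the index sets $I^r,J^r$ of this section are built from the $F$-type sets; this causes no trouble because $V_{x_0}(A)\subseteq F_{x_0}(A)$, so the functions supplied by Lemma \ref{ro+} still lie in the sets over which $I^r$ and $J^r$ intersect.

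After disposing of the trivial case $f=0$, I would prove $|f(\Phi(y_0))|\ge|Tf(y_0)|$ by contradiction. Assuming $|f(\Phi(y_0))|+\epsilon<|Tf(y_0)|$ for a small $\epsilon>0$, under hypothesis (a) Lemma \ref{ro+}(i) yields $h\in V_{\Phi(y_0)}(A)\subseteq F_{\Phi(y_0)}(A)$ with $\rho_+(f,h)<\rho_+(|f(\Phi(y_0))|+\epsilon,1)$; since $y_0\in J^1_{\Phi(y_0)}$ by Lemma \ref{singleton2}, we have $|Th(y_0)|=1$, and the chain
\[ \rho_+(|Tf(y_0)|,1)\le\rho_+(Tf,Th)=\rho_+(f,h)<\rho_+(|f(\Phi(y_0))|+\epsilon,1)<\rho_+(|Tf(y_0)|,1) \]
(the first step by passing from a point to the supremum together with Lemma \ref{lem1}(ii), the last by (inc)) gives the contradiction. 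Under hypothesis (b), Lemma \ref{ro+}(ii) gives $\lambda>0$ and $h\in V_{\Phi(y_0)}(A)$ with $\rho_+(f,\lambda h)<\rho_+(|f(\Phi(y_0))|+\epsilon,\lambda)$; here I use that $A$ is closed under multiplication by positive scalars, so $\lambda h\in\lambda F_{\Phi(y_0)}(A)$, whence $y_0\in J^\lambda_{\Phi(y_0)}$ forces $|T(\lambda h)(y_0)|=\lambda$, and the same chain with $1$ replaced by $\lambda$ closes the case.

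For the reverse inequality $|Tf(y_0)|\le|f(\Phi(y_0))|$ I would exploit the symmetry of the hypotheses in $A$ and $B$ together with the surjectivity of $T$. Assuming $|Tf(y_0)|+\epsilon<|f(\Phi(y_0))|$, I apply Lemma \ref{ro+} to $B$, $Tf$, and $y_0\in\delta(B)$ to produce a peaking function $k\in V_{y_0}(B)$ (and, in case (b), a scalar $\mu>0$) with $\rho_+(Tf,\mu k)<\rho_+(|Tf(y_0)|+\epsilon,\mu)$. Writing $\mu k=Th_1$ by surjectivity and using $\Phi(y_0)\in I^\mu_{y_0}$ from Lemma \ref{singleton2} to get $|h_1(\Phi(y_0))|=\mu$, the norm condition transports the estimate back to $A$ and produces the symmetric contradiction
\[ \rho_+(|f(\Phi(y_0))|,\mu)\le\rho_+(f,h_1)=\rho_+(Tf,Th_1)<\rho_+(|Tf(y_0)|+\epsilon,\mu)<\rho_+(|f(\Phi(y_0))|,\mu). \]
Combining the two inequalities yields $|Tf(y_0)|=|f(\Phi(y_0))|$.

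I do not expect a genuine obstacle, since the argument is structurally identical to that of Lemma \ref{ro+equ}; the only points requiring care are verifying that Lemma \ref{ro+} applies unchanged (it does, being phrased for arbitrary subsets of $C_0$), and that the passage from the $V$-type peaking functions to the $F$-type index sets $I^r,J^r$, as well as the use of closedness under positive scalars in case (b), are correctly invoked.
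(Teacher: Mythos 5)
Your proof is correct and is essentially the argument the paper intends: the paper omits this proof entirely, stating only that it is similar to that of Lemma \ref{ro+equ}, and your reconstruction carries out exactly that adaptation, including the symmetric pull-back via surjectivity for the reverse inequality. The bookkeeping points you flag --- that Lemma \ref{ro+} applies verbatim because it is stated for arbitrary subsets of $C_0(X)$ with a strong boundary point, that $V_{x_0}(A)\subseteq F_{x_0}(A)$ bridges the $F$-type index sets $I^r$, $J^r$ of Section~5 via Lemma \ref{singleton2}, and that closedness under multiplication by strictly positive scalars justifies $\lambda h\in A$ in case (b) --- are precisely the modifications the transfer requires.
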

The same proof as in Theorem \ref{main1} can be applied to show
that $\Phi$ is a homeomorphism. This completes the proof of
Theorem \ref{main2}.


\begin{thebibliography}{99}

\bibitem{Survey} Hatori, O., Lambert, S., Luttman, A., Miura, T., Tonev, T., Yates, R.:
Spectral preservers in commutative Banach algebras.
Contemp. Math. 547, 103--123 (2011)

\bibitem {Hatori} Hatori, O.,  Miura, T., Takagi, H.:
Characterizations of isometric isomorphisms between uniform
algebras via nonlinear range-preserving properties.
Proc. Amer. Math. Soc. 134, 2923--2930 (2006)

\bibitem{Hat-mon} Hatori, O.,  Hino, K., Miura T., Oka, H.:
Peripherally monomial-preserving maps between uniform algebras.
Mediterr. J. Math. 6, 47--59 (2009)

\bibitem{Hat-etal} Hatori, O., Miura, T., Shindo, R., Takagi, H.:
Generalizations of spectrally multiplicative surjections between uniform algebras.
Rend. Circ. Mat. Palermo 59, 161--183 (2010)

\bibitem{Hat-etal2} Hatori, O., Kobayashi K., Miura T., Takahasi, S. E.:
Reflections and a generalization of the Mazur-Ulam theorem.
Rocky Mountain J. Math. 42, No.1, 117--150 (2012)

\bibitem{Hos-Sady} Hosseini M., Sady, F.:
Multiplicatively range-preserving maps between Banach function algebras.
J. Math. Anal. Appl. 357, 314--322 (2009)

\bibitem{Jaf-Sady} Jafarzadeh, B., Sady, F.: Generalized norm preserving maps between subsets of continuous
functions. Positivity (2018), https://doi.org/10.1007/s11117-018-0597-y

\bibitem{Jam-Sady} Jamshidi, A., Sady, F.:
Extremely strong boundary points and real-linear isometries.
Tokyo J. Math. 38, 477--490 (2015)

\bibitem{Lam-Lt-Tonev} Lambert, S., Luttman, A., Tonev, T.:
Weakly peripherally-multiplicative mappings between uniform algebras.
Contemp. Math. 435, 265--281 (2007)

\bibitem{Leib} Leibowitz, G. M.:
Lectures on Complex Function Algebras.
Scott-Foresman, Glenview, Illinois (1970)

\bibitem{Miura-Tonev} Miura, T., Tonev, T.:
Mappings onto multiplicative subsets of function algebras and spectral
properties of their products.
Ark. Mat. 53, 329--358 (2015)

\bibitem{Molnar} Moln\'ar, L.:
Some characterizations of the automorphisms of $B(H)$ and $C(X)$.
Proc. Amer. Math. Soc. 130, 111--120 (2002)

\bibitem{Molnar-Z} Moln\'ar, L., Szokol, P.:
Transformations preserving norms of
means of positive operators and nonnegative functions.
Integr. Equ. Oper. Theory 83, 271--290 (2015)

\bibitem{Rao-Roy1} Rao, N. V., Roy, A. K.:
Multiplicatively spectrum-preserving maps of function algebras.
Proc. Amer. Math. Soc. 133, 1135--1142 (2005)

\bibitem{Rao-Roy2} Rao, N. V., Roy, A. K.:
Multiplicatively spectrum-preserving maps of function algebras II.
Proc. Edinb. Math. Soc. 48, 219--229 (2005)

\bibitem{Taylor} Taylor, A. E., Lay, D. C.:
Introduction to Functional Analysis.
2nd Ed., Wiley, New York (1980)

\bibitem{Tonev-Yates} Tonev, T., Yates, R.:
Norm-linear and norm-additive operators between uniform algebras.
J. Math. Anal. Appl. 57, 45--53 (2009)

\end{thebibliography}
\end{document}